\documentclass{birkmult}


\numberwithin{equation}{section}
\newtheorem{theorem}{Theorem}[section]
\newtheorem{lemma}[theorem]{Lemma}
\newtheorem{definition}[theorem]{Definition}
\newtheorem{corollary}[theorem]{Corollary}
\newtheorem{proposition}[theorem]{Proposition}
\newtheorem{remark}[theorem]{Remark}
\newtheorem{example}[theorem]{Example}

\newcommand{\Ent}{\mbox{\rm Ent}}

\newcommand{\vol}{\mbox{\rm vol}}

\newtheorem{conjecture}[theorem]{Conjecture}
\newcommand{\R}{\mathbb{R}}
\newcommand{\K}{\mathcal{K}}
\newcommand{\G}{\mathcal{G}}
\newcommand{\Conj}{{\frak C}}
\newcommand{\cc}{^{\frak c}}

\parindent0pt


\begin{document}

\title{Entropic Measure on Multidimensional Spaces}

\author{Karl-Theodor Sturm}

\address{%
Institut fuer Angewandte Mathematik\\
Poppelsdorfer Allee 82\\
D 53115 Bonn\\
Germany}

\email{sturm@uni-bonn.de}

\begin{abstract}
We construct the entropic measure $\mathbb{P}^\beta$ on compact manifolds of any dimension. It is defined as the push forward of the Dirichlet process (another random
probability measure, well-known to exist on spaces of any dimension) under the {\em conjugation map} $$\Conj:\mathcal{P}(M)\to\mathcal{P}(M).$$ This conjugation map
is a continuous involution. It can be regarded as the canonical extension to higher dimensional spaces of a map between probability measures on 1-dimensional spaces
characterized by the fact that the distribution functions of $\mu$ and $\Conj(\mu)$ are inverse to each other.

We also present an heuristic interpretation of the entropic measure as
$$d\mathbb{P}^\beta(\mu)=\frac{1}{Z}\exp\left(-\beta\cdot \mbox{Ent} (\mu|m)\right)\cdot
d\mathbb{P}^0(\mu).$$
\end{abstract}


\subjclass{60G57; 28C20;  49N90; 49Q20;   58J65}

\keywords{Optimal transport, entropic measure, Wasserstein space, entropy, gradient flow, Brenier map, Dirichlet distribution, random probability measure}

\date{January 1, 2009}
\dedicatory{}

\maketitle

\section{Introduction}

Gradient flows of entropy-like functionals on the Wasserstein space turned out to be a powerful tool in the study of various dissipative PDEs on Euclidean or
Riemannian spaces $M$, the prominent example being the heat equation. See e.g. the monographs \cite{Villani03, AGS05} for more examples and further references.

In \cite{SR08}, von Renesse and the author  presented an approach to stochastic perturbation of the gradient flow of the entropy. It is based on the construction of
a
Dirichlet form
 \begin{eqnarray*}
\mathcal{E}(u,u)&=& \int_{\mathcal{P}(M)} \|\nabla u\|^2(\mu) \ d\mathbb{P}^\beta(\mu)\end{eqnarray*}
where $\|\nabla u\|$ denotes the norm of the gradient in the Wasserstein space $\mathcal{P}(M)$ as introduced by Otto \cite{Otto01}. The fundamental new ingredient
was the measure $\mathbb{P}^\beta$ on the Wasserstein space. This so-called {\em entropic measure} is an interesting and challenging object in its own right.
It is formally introduced as
\begin{equation}d\mathbb{P}^\beta(\mu)=\frac1Z\exp\left(-\beta\cdot \mbox{Ent} (\mu|m)\right)\cdot
d\mathbb{P}^0(\mu)
\end{equation}
with some (non-existing) `uniform distribution' $\mathbb{P}^0$ on the Wasserstein space $\mathcal{P}(M)$ and the relative entropy
 as a potential.

A rigorous construction was presented for 1-dimensional spaces. In the case $M=[0,1]$ it is based on the bijections
\begin{equation*}
\mu\quad \stackrel{\underleftrightarrow{\scriptstyle{(x)=\mu([0,x])}}}{} \quad f
\quad \stackrel{\underleftrightarrow{\quad \scriptstyle{g=f^{(-1)}}\quad}}{} \quad g
\quad \stackrel{\underleftrightarrow{\scriptstyle {g(y)=\nu([0,y])}}}{} \quad \nu
\end{equation*}

between
{\em probability measures, distribution functions} and {\em inverse distribution functions}
(where $f^{(-1)}(y)=\inf\{x\ge 0: f(x)\ge y\}$ more precisely denotes the `right inverse' of $f$).
If $\Conj: \mathcal{P}(M)\to\mathcal{P}(M)$ denotes the map $\mu\mapsto\nu$ then the entropic measure $\mathbb{P}^\beta$ is just the push forward under $\Conj$ of
the Dirichlet-Ferguson process $\mathbb{Q}^\beta$. The latter is a random probability measure which is well-defined on every probability space.

\medskip

For long time it seemed that the previous construction is definitively limited to dimension 1 since it heavily depends on the use of distribution functions (and
inverse distribution functions), -- objects which do not exist in higher dimensions.
The crucial observation to overcome this restriction is to interpret $g$ as the unique {\em optimal transport map} which pushes forward $m$ (the normalized uniform
distribution on $M$) to $\mu$:
$$\mu=g_*m.$$
Due to Brenier \cite{Brenier87} and McCann \cite{McCann01} such a `monotone map' exists for each probability measure $\mu$ on a Riemannian manifold of arbitrary
dimension.
Moreover, also in higher dimensions such a monotone map $g$ has a unique generalized inverse $f$, again being a monotone map (with generalized inverse being $g$).
This observation allows to define the {\em conjugation map}
$$\Conj: \mathcal{P}(M)\to\mathcal{P}(M), \ \mu\mapsto\nu$$ for any compact manifold $M$. It is a continuous involution.
By means of this map we define the entropic measure as follows:
$$\mathbb{P}^\beta:= \Conj_*\mathbb{Q}^\beta$$
where $\mathbb{Q}^\beta$ denotes the Dirichlet-Ferguson process on $M$ with intensity measure $\beta\cdot m$. (Actually, such a random probability
measure exists on every probability space.)

\medskip

In order to justify our definition of the entropic measure by some heuristic argument
let us assume that  $\mathbb{P}^\beta$ were given as in (1.1). The identity
$\mathbb{Q}^\beta= \Conj_*\mathbb{P}^\beta$
then defines a probability measure which satisfies
\begin{equation}
d\mathbb{Q}^\beta(\nu)=\frac1Z\exp\left(-\beta\cdot \mbox{Ent} (m|\nu)\right)\cdot
d\mathbb{Q}^0(\nu).\end{equation}

Given a measurable partition $M = {\bigcup}_{i=1}^N M_i$
and approximating arbitrary probability measures $\nu$ by
measures with constant density on each of the sets $M_i$ of the partition
the previous ansatz (1.2) yields -- after some manipulations --
\begin{align*}
  &\mathbb{Q}^\beta_{M_1,\ldots,M_N} (dx)  \\
&= \frac{\Gamma(\beta)}{\overset{N}{\underset{i=1}{\prod}} \Gamma (\beta m (M_i))} \cdot x_1^{\beta \cdot m (M_1)-1} \cdot \ldots \cdot x_{N-1}^{\beta \cdot m
(M_{N-1})-1} \cdot x_N^{\beta \cdot m (M_N)-1} \times \\
 &\quad\times \delta_{(1-{\overset{N-1}{\underset{i=1}{\sum}}} x_i)}(dx_N) dx_{N-1} \ldots dx_1.
\end{align*}
These are, indeed, the finite dimensional distributions of
 the Dirichlet-Ferguson process.

\section{Spaces of Convex Functions and Monotone Maps}
Throughout this paper, $M$ will be a compact subset of a complete Riemannian manifold $\hat M$ with Riemannian distance $d$ and $m$ will denote a probability measure
with support  $M$, absolutely continuous  with respect to the volume measure. We assume that it satisfies a Poincar\'e inequality: $\exists c>0$
$$\int_M |\nabla u|^2\,dm\ge c\cdot\int_Mu^2\,dm$$
for all weakly differentiable $u:M\to \mathbb{R}$ with $\int_M u\,dm=0$.

For compact Riemannian manifolds, there is a canonical choice for $m$, namely, the normalized Riemannian volume measure. The freedom to choose $m$ arbitrarily might
be of advantage in view of future extensions: For Finsler manifolds and for non-compact Riemannian manifolds there is no such canonical probability measure.

\medskip

The main ingredient of our construction below will be the Brenier-McCann representation of optimal transport in terms of gradients of convex functions.
\begin{definition}
A function $\varphi: M \rightarrow \R $ is called $d^2/2$-convex if there exists a function $\psi: M \rightarrow \R $ such that
\begin{displaymath}
 \varphi(x)=-\underset{y\in M}{\inf} \left[\frac{1}{2} d^2(x,y)+\psi(y)\right]
\end{displaymath}
for all $x \in M$. In this case, $\varphi$ is called \emph{generalized Legendre transform} of $\psi$ or \emph{conjugate} of $\psi$ and denoted by
$$\varphi=\psi\cc.$$
\end{definition}

Let us summarize some of the basic facts on $d^2/2$-convex functions. See \cite{Rockafellar70}, \cite{Rueschendorf96}, \cite{McCann01} and \cite{Villani08} for
details.\footnote{A function $\varphi$ is $d^2/2$-convex in our sense if and only if the function $-\varphi$ is $c$-concave in the sense of
\cite{Rockafellar70, Rueschendorf96, McCann01,Villani08} with cost function $c(x,y)=d^2(x,y)/2$.
In our presentation, the ${}\cc$ stands for `conjugate'. For the relation between $d^2/2$-convexity and usual convexity on Euclidean space we refer to chapter 4.}

\begin{lemma}
\begin{enumerate}
 \item A function $\varphi$ is $d^2/2$-convex if and only if $${\varphi\cc}\cc = \varphi.$$
 \item Every $d^2/2$-convex function is bounded, Lipschitz continuous and differentiable almost everywhere with gradient bounded by $D= \underset{x,y \in
     M}{\sup}
     d(x,y)$.
\end{enumerate}
\end{lemma}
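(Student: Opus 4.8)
The plan is to reduce both assertions to elementary properties of the conjugation operation, written in its equivalent ``sup'' form
\[
\psi\cc(x)=\sup_{y\in M}\Bigl(-\frac12 d^2(x,y)-\psi(y)\Bigr),\qquad x\in M .
\]
Since $M$ is compact and $D=\sup_{x,y\in M}d(x,y)<\infty$, the cost kernel $\frac12 d^2$ is bounded, so whenever $\psi$ is bounded below the supremum is finite and attained and $\psi\cc$ is an honest real-valued function. Two formal facts carry the argument: (a) conjugation is order reversing, i.e.\ $\psi_1\le\psi_2$ implies $\psi_1\cc\ge\psi_2\cc$, which is immediate from the displayed formula; and (b) the pointwise inequality $(\psi\cc)\cc\le\psi$, valid for every $\psi$, obtained by writing
\[
(\psi\cc)\cc(z)=\sup_{x\in M}\inf_{y\in M}\Bigl(-\frac12 d^2(z,x)+\frac12 d^2(x,y)+\psi(y)\Bigr)
\]
and bounding the inner infimum by its value at $y=z$, which equals $\psi(z)$.

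For part (i) I would argue as follows. Applying (b) to $\varphi$ itself gives $(\varphi\cc)\cc\le\varphi$ for \emph{every} function $\varphi$, with no hypothesis. For the reverse inequality, assume $\varphi=\psi\cc$ is $d^2/2$-convex; apply the order-reversing property (a) to the inequality $(\psi\cc)\cc\le\psi$ (which is (b) for $\psi$) to obtain $((\psi\cc)\cc)\cc\ge\psi\cc$, that is $(\varphi\cc)\cc\ge\varphi$. The two inequalities together give $(\varphi\cc)\cc=\varphi$. Conversely, if $(\varphi\cc)\cc=\varphi$ then $\varphi=(\varphi\cc)\cc$ exhibits $\varphi$ as the conjugate of $\psi:=\varphi\cc$, so $\varphi$ is $d^2/2$-convex by definition. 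The only point to check is that $\varphi\cc$ is then a genuine real-valued function: since $(\varphi\cc)\cc=\varphi$ is finite everywhere, $\varphi\cc$ cannot be identically $+\infty$; and $\varphi\cc(z)\ge -\frac12 d^2(x_0,z)-\varphi(x_0)\ge -\frac12 D^2-\varphi(x_0)$ for a fixed $x_0$ shows $\varphi\cc$ is bounded below; the Lipschitz estimate below, whose proof needs no finiteness assumption, then propagates finiteness from one point to all of $M$.

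For part (ii) the key estimate is the uniform Lipschitz bound on the cost kernel: for all $x_1,x_2,y\in M$,
\[
\Bigl|\frac12 d^2(x_1,y)-\frac12 d^2(x_2,y)\Bigr|=\frac12\bigl|d(x_1,y)-d(x_2,y)\bigr|\cdot\bigl(d(x_1,y)+d(x_2,y)\bigr)\le D\cdot d(x_1,x_2),
\]
using the triangle inequality and $d(\cdot,\cdot)\le D$. Feeding this into the supremum defining $\varphi=\psi\cc$ yields, for every $y$, the bound $-\frac12 d^2(x_1,y)-\psi(y)\le\varphi(x_2)+D\,d(x_1,x_2)$, hence $\varphi(x_1)\le\varphi(x_2)+D\,d(x_1,x_2)$; exchanging $x_1$ and $x_2$ shows $\varphi$ is $D$-Lipschitz. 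Boundedness is then immediate since $\varphi$ is real-valued at one point and $\diam(M)\le D<\infty$. At every point where $\varphi$ is differentiable the norm of its gradient is bounded by its Lipschitz constant $D$, and $\varphi$ is differentiable almost everywhere by Rademacher's theorem applied in local coordinate charts (smooth charts send volume-null sets to Lebesgue-null sets and conversely, so the a.e.\ statement does not depend on the chart).

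I do not expect a serious obstacle. The only slightly delicate point is the bookkeeping in part (i) around the finiteness and properness of the functions $\psi$, $\psi\cc$, $(\psi\cc)\cc$ that are manipulated---one must ensure none of them is identically $\pm\infty$---but the compactness of $M$ together with the Lipschitz and boundedness statement of part (ii) disposes of this cleanly. Everything else is the classical Legendre--Fenchel duality specialised to the cost $d^2/2$, exactly as in \cite{Rockafellar70, Rueschendorf96, McCann01, Villani08}.
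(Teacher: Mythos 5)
Your proof is correct. The paper does not actually prove this lemma --- it records it as a standard fact and refers to \cite{Rockafellar70}, \cite{Rueschendorf96}, \cite{McCann01}, \cite{Villani08} --- and your argument (the order-reversal of conjugation plus the inequality $(\psi\cc)\cc\le\psi$, giving the triple-conjugation identity for part (i), and the uniform bound $|\tfrac12 d^2(x_1,y)-\tfrac12 d^2(x_2,y)|\le D\,d(x_1,x_2)$ plus Rademacher for part (ii)) is exactly the classical one from those references, with the finiteness bookkeeping handled correctly.
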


In the sequel, ${\K}={\K}(M)$ will denote the set of $d^2/2$-convex functions on $M$ and $\tilde{\K} = \tilde{\K}(M)$ will denote the set of equivalence classes
in  ${\K}$ with $\varphi_1 \sim \varphi_2$ iff $\varphi_1 - \varphi_2$ is constant.
${\K}$ will be regarded as a subset of the Sobolev space $H^1(M,m)$ with norm
$$\parallel u \parallel_{H^1} = \left[ \int_M \mid \bigtriangledown u\mid^2 dm + \int_M u^2 dm \right]^\frac{1}{2}$$
and $\tilde{\K} = {\K}/const $ will be regarded as a subset of the space $\tilde{H^1}= H^1 / const$ with norm
$$\parallel u \parallel_{\tilde{H^1}} = \left[ \int_M \mid \nabla u\mid^2 dm \right]^ \frac{1}{2}.$$

\begin{proposition}
For each Borel map $g:M\rightarrow M$ the following are equivalent:
\begin{enumerate}
 \item $\exists \varphi \in \tilde\K : g= \exp(\nabla \varphi)$ a.e. on $M$;
 \item $g$ is an optimal transport map from $m$ to $f_\ast m$ in the sense that it is a minimizer of $h \mapsto \int_M d^2(x,h(x))m(dx)$ among all Borel maps
     $h:M\rightarrow M$ with $h_\ast m = g_\ast m$.
\end{enumerate}

\end{proposition}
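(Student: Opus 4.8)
The plan is to establish the equivalence (i) $\Leftrightarrow$ (ii) by invoking the Brenier--McCann theory of optimal transport on Riemannian manifolds, which guarantees existence and uniqueness of the optimal map, and then identifying that map with $\exp(\nabla\varphi)$ for a $d^2/2$-convex potential $\varphi$. I would split the argument into the two implications, with the heavy lifting done by quoting McCann's theorem \cite{McCann01}.

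\textbf{Proof of (i) $\Rightarrow$ (ii).} Suppose $g=\exp(\nabla\varphi)$ a.e.\ for some $\varphi\in\tilde\K$, and set $\mu=g_*m$. By Lemma~2.3, $\varphi$ is Lipschitz and differentiable $m$-a.e., so $g$ is well-defined $m$-a.e.\ (here one uses that $m$ is absolutely continuous with respect to the volume measure, so the exceptional null set is genuinely negligible). The key point is the pointwise inequality $\frac12 d^2(x,h(x))\ge \varphi(x)+\varphi\cc(h(x))$ valid for \emph{every} Borel map $h$ — this is just the definition of the conjugate $\varphi\cc$ — together with the fact that equality holds when $h=g=\exp(\nabla\varphi)$: indeed, at a point $x$ of differentiability, the infimum defining $\varphi\cc(g(x))$ is attained precisely at $x$ because $y\mapsto \frac12 d^2(\exp_x(\nabla\varphi(x)),y)+(\varphi\cc)\cc(y) = \frac12 d^2(g(x),y)+\varphi(y)$ has $x$ as a critical (hence, by $d^2/2$-convexity, minimizing) point; this is the first-order optimality condition $\nabla_x\bigl[\tfrac12 d^2(x,g(x))\bigr]=\nabla\varphi(x)$ that defines $\exp(\nabla\varphi)$. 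Integrating against $m$ and using $h_*m=g_*m=\mu$ to rewrite $\int_M \varphi\cc(h(x))\,m(dx)=\int_M \varphi\cc\,d\mu$, the cross term is the same for $h$ and for $g$, so
\begin{equation*}
\int_M d^2(x,g(x))\,m(dx)=2\int_M\varphi\,dm+2\int_M\varphi\cc\,d\mu\le\int_M d^2(x,h(x))\,m(dx),
\end{equation*}
which is the claimed minimality.

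\textbf{Proof of (ii) $\Rightarrow$ (i).} Assume $g$ is optimal between $m$ and $\mu:=g_*m$. Since $m$ is absolutely continuous with respect to the volume measure (in particular it gives no mass to sets of dimension less than $\dim M$, e.g.\ the cut locus of any point), McCann's theorem \cite{McCann01} applies and yields a $d^2/2$-convex function $\varphi$ such that the unique optimal map from $m$ to $\mu$ is $x\mapsto\exp_x(\nabla\varphi(x))$, defined $m$-a.e. By uniqueness of the optimal map (Brenier \cite{Brenier87}, McCann \cite{McCann01}), $g$ agrees $m$-a.e.\ with $\exp(\nabla\varphi)$. Finally $\varphi$ is determined up to an additive constant, hence defines an element of $\tilde\K$, giving (i).

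\textbf{Main obstacle.} The substantive mathematical content — existence, uniqueness, and Brenier--McCann structure of the optimal map on a Riemannian manifold under the absolute-continuity hypothesis on the source measure — is precisely what we import from \cite{Brenier87,McCann01} and is not to be reproven here; the only genuine work is the careful bookkeeping of negligible sets (ensuring the cut-locus and non-differentiability sets are $m$-null, which follows from $m\ll\vol$) and the verification that the first-order condition defining $\exp(\nabla\varphi)$ coincides with the equality case in the Legendre--Fenchel inequality $\tfrac12 d^2(x,y)\ge\varphi(x)+\varphi\cc(y)$. This last equivalence is the heart of the matter and the step I would write out most carefully.
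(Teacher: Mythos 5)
The paper itself gives no proof of this proposition: it is quoted wholesale from Brenier \cite{Brenier87} and McCann \cite{McCann01} (see the references listed before Lemma 2.2), so your strategy of importing existence, uniqueness and the gradient structure from McCann for (ii)$\Rightarrow$(i) is exactly the paper's own route, and the duality argument you sketch for (i)$\Rightarrow$(ii) is the standard complement. Two points in your write-up, however, do not survive scrutiny as stated. First, a sign issue: with the paper's Definition 2.1, $\varphi\cc(x)=-\inf_{y}\left[\tfrac12 d^2(x,y)+\varphi(y)\right]$, so the Fenchel-type inequality reads $\tfrac12 d^2(x,y)\ge -\varphi(x)-\varphi\cc(y)$, not $\tfrac12 d^2(x,y)\ge \varphi(x)+\varphi\cc(y)$; your version is genuinely false for this convention (test it with $\varphi(x)=-\tfrac12 d^2(x,y_0)$), and accordingly your displayed cost identity should be $\int d^2(x,g(x))\,dm=-2\int\varphi\,dm-2\int\varphi\cc\,d\mu$. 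This is bookkeeping, but since the whole point of the easy direction is this inequality plus its equality case, the signs must match the paper's $\Conj_\K$.

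Second, and more substantively, your justification of the equality case is not a valid argument: you claim that $x$ is a critical point of $y\mapsto\tfrac12 d^2(g(x),y)+\varphi(y)$ and that "by $d^2/2$-convexity" a critical point is minimizing. On a Riemannian manifold neither $y\mapsto\tfrac12 d^2(z,y)$ nor a $d^2/2$-convex $\varphi$ is geodesically convex in general (think of the sphere in the paper's Example 3.2), so criticality does not yield global minimality, and moreover the first-order condition you invoke is what you are trying to prove, not a definition of $\exp(\nabla\varphi)$. The correct argument, due to McCann (Lemma 7 of \cite{McCann01}; also in \cite{Villani08}), runs the other way: by compactness of $M$ the infimum defining $({\varphi\cc})\cc(x)=\varphi(x)$ (Lemma 2.2(i)) is attained at some $\bar y$, so $\varphi(\cdot)+\tfrac12 d^2(\cdot,\bar y)$ has a minimum at $x$; at a point $x$ where $\varphi$ is differentiable this forces $\nabla\varphi(x)=-\tfrac12\nabla_x d^2(x,\bar y)$, which identifies $\bar y=\exp_x(\nabla\varphi(x))=g(x)$ and hence gives the pointwise equality $-\varphi(x)-\varphi\cc(g(x))=\tfrac12 d^2(x,g(x))$ for $m$-a.e.\ $x$. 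With that lemma in place (or simply cited, in the same spirit in which you cite McCann for the converse), your integration argument closes correctly.
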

In this case, the function $\varphi \in \tilde \K$ in (i) is defined uniquely. Moreover, in (ii) the map $f$ is the unique minimizer of the given minimization
problem.

A Borel map $g:M\rightarrow M$ satisfying the properties of the previous proposition will be called \emph{monotone map} or \emph{optimal Lebesque transport}.
The set of $m$-equivalence classes of such maps will be denoted by $\G=\G(M)$.
Note that $\G(M)$ does \emph{not depend} on the choice of $m$ (as long as $m$ is absolutely continuous with full support)!
$\G(M)$ will be regarded as a subset of the space of maps $L^2((M,m)(M,d))$ with metric $d_2(f,g) = \left[ \int_M d^2(f(x),g(x))m(dx) \right]^\frac{1}{2} $.

According to our definitions, the map $\Upsilon:\varphi \mapsto \exp(\nabla \varphi)$ defines a bijection between $\tilde \K$ and $\G$.
Recall that $\mathcal{P} = \mathcal{P}(M)$ denotes the set of probability measures $\mu$ on $M$ (equipped with its Borel $\sigma$-field).

\begin{proposition}
 The map $\chi: g \mapsto g_\ast m$ defines a bijection between $\G$ and $\mathcal{P}(M)$. That is, for each $\mu \in \mathcal{P}$ there exists a unique $g \in \G$
 --
 called \emph{Brenier map} of $\mu$ -- with $\mu = g_\ast m$.
\end{proposition}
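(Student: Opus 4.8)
The plan is to read the statement off from the Brenier--McCann theory already assembled above: surjectivity will come from McCann's existence theorem for optimal maps on Riemannian manifolds \cite{Brenier87, McCann01}, and injectivity from the uniqueness clause of the preceding proposition. I would begin by noting that $\chi$ is well defined on $\G$: if two Borel maps agree $m$-a.e.\ then they have the same push-forward of $m$, so $\chi$ descends to the set of $m$-equivalence classes.

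For surjectivity I would fix $\mu\in\mathcal{P}(M)$ and invoke McCann's theorem. Since $m$ is absolutely continuous with respect to the volume measure and $M$ is compact (so that the cost $\frac12 d^2$ is bounded), there is a unique optimal transport map from $m$ to $\mu$, and it has the form $g=\exp(\nabla\varphi)$ for some $d^2/2$-convex $\varphi$; moreover the infimum defining $\varphi=\psi\cc$ may be restricted to the support $M$ of $\mu$. Then $g_\ast m=\mu$, and by the preceding proposition $g\in\G$, so $\chi(g)=\mu$. The point I would be most careful about here is that $\mu$ is allowed to be singular: this is precisely the generality in which McCann's theorem still produces an honest transport map, since only the regularity of the source $m$ is used. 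I would also record the (routine) remark that one may carry out the construction in the ambient manifold $\hat M$ and then restrict to the compact set $M$ on which both $m$ and $\mu$ are concentrated, so that the resulting map takes values in $M$ and the conjugate is a $d^2/2$-convex function in the sense of Definition~2.1.

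For injectivity I would take $g_1,g_2\in\G$ with $\chi(g_1)=\chi(g_2)=:\mu$. By the definition of $\G$ each $g_i$ satisfies property (i) of the preceding proposition, hence also property (ii): it is an optimal transport map from $m$ to $(g_i)_\ast m=\mu$. The uniqueness assertion of that proposition --- the minimizer of $h\mapsto\int_M d^2(x,h(x))\,m(dx)$ among Borel maps with $h_\ast m=\mu$ is unique --- then forces $g_1=g_2$ in $L^2((M,m),(M,d))$, i.e.\ as elements of $\G$.

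I do not expect a genuine obstacle, since the analytic heart of the matter --- existence, uniqueness, and the $\exp(\nabla\varphi)$ structure of optimal maps --- is imported from \cite{Brenier87, McCann01} and already recorded in the preceding proposition. The only mildly delicate steps are the appeal to McCann's theorem for a possibly singular target $\mu$ and the bookkeeping that passes between $\hat M$ and the compact set $M$; everything else is formal.
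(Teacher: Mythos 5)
Your argument is correct and matches the paper's (implicit) justification: the paper states this proposition without proof, as a direct consequence of the Brenier--McCann theory \cite{Brenier87, McCann01} already recorded in the preceding proposition, which is exactly how you argue---existence of the optimal map $\exp(\nabla\varphi)$ from the absolutely continuous source $m$ to an arbitrary (possibly singular) target $\mu$ gives surjectivity, and the uniqueness of the minimizer gives injectivity. No gap to report.
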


The map $\chi$ of course strongly depends on the choice of the measure $m$. (If there is any ambiguity we denote it by $\chi_m$.)

Due to the previous observations, there exist canonical bijections $\Upsilon$ and $\chi$ between the sets $\tilde \K$, $\G$ and $\mathcal{P}$.
Actually, these bijections are even homeomorphisms with respect to the natural topologies on these spaces.

\begin{proposition}
Consider any sequence $\left\lbrace \varphi_n \right\rbrace_{n \in \mathbb{N}} $ in $\tilde \K$ with corresponding sequences $\left\lbrace g_n \right\rbrace_{n \in
\mathbb{N}} = \left\lbrace \Upsilon(\varphi_n) \right\rbrace_{n \in \mathbb{N}}$ in $\G$ and $\left\lbrace \mu_n \right\rbrace_{n \in \mathbb{N}} = \left\lbrace
\chi(g_n) \right\rbrace_{n \in \mathbb{N}}$ in $\mathcal{P}$ and let $\varphi \in \tilde \K$, $g=\Upsilon(\varphi) \in \G$, $\mu =\chi(g) \in \mathcal{P}$. Then the
following are equivalent:
\begin{enumerate}
 \item $\varphi_n \longrightarrow \varphi$ in $\tilde H_1$
 \item $g_n \longrightarrow  g$ in $L^2((M,m),(M,d))$
 \item $g_n \longrightarrow  g$ in $m$-probability on $M$
 \item $\mu_n \longrightarrow  \mu$ in $L^2$-Wasserstein distance $d_W$
 \item $\mu_n \longrightarrow  \mu$ weakly.
\end{enumerate}

\end{proposition}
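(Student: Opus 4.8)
The plan is to establish the cycle of implications (i)$\Rightarrow$(ii)$\Rightarrow$(iii)$\Rightarrow$(iv)$\Rightarrow$(v)$\Rightarrow$(i), using the compactness of $M$ at several junctures to upgrade weak convergence to the stronger metric statements. Before running the cycle I would record the a priori bound from Lemma 2.3(ii): every $g_n=\exp(\nabla\varphi_n)$ maps $M$ into $M$, so $d(g_n(x),g(x))\le D$ for all $x$, and $|\nabla\varphi_n|\le D$ $m$-a.e.; hence the family $\{\varphi_n\}$ (normalized, say, by $\int_M\varphi_n\,dm=0$) is equi-Lipschitz and uniformly bounded. This bound is what makes the whole chain run by dominated convergence.

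For (i)$\Rightarrow$(ii): convergence in $\tilde H^1$ means $\nabla\varphi_n\to\nabla\varphi$ in $L^2(M,m)$; after passing to a subsequence, $\nabla\varphi_n(x)\to\nabla\varphi(x)$ for $m$-a.e.\ $x$, and then continuity of $\exp(\cdot)$ on $TM$ gives $g_n(x)=\exp(\nabla\varphi_n(x))\to\exp(\nabla\varphi(x))=g(x)$ pointwise a.e., whence $d_2(g_n,g)\to 0$ by dominated convergence with the bound $d^2\le D^2$; since the limit is independent of the subsequence this gives convergence of the full sequence. The implication (ii)$\Rightarrow$(iii) is immediate (convergence in $L^2(M,m;(M,d))$ implies convergence in $m$-measure, since $d$ is bounded). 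For (iii)$\Rightarrow$(iv), convergence in $m$-probability plus the uniform bound $d\le D$ again yields $d_2(g_n,g)\to 0$; but $g_n,g$ are admissible couplings of $m$ with $\mu_n,\mu$ respectively, both transporting $m$, so $(g_n,g)_*m$ is a coupling of $\mu_n$ and $\mu$ and $d_W^2(\mu_n,\mu)\le\int_M d^2(g_n,g)\,dm=d_2(g_n,g)^2\to 0$. Then (iv)$\Rightarrow$(v) is standard: on the compact space $M$, $d_W$ metrizes the weak topology on $\mathcal P(M)$.

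The serious step is the closing implication (v)$\Rightarrow$(i), i.e.\ upgrading weak convergence $\mu_n\rightharpoonup\mu$ back to $\tilde H^1$-convergence of the potentials. Here I would proceed by compactness: by the equi-Lipschitz, uniformly bounded family $\{\varphi_n\}$ and Arzel\`a--Ascoli, every subsequence has a further subsequence converging uniformly to some $\varphi_\infty\in C(M)$; stability of $d^2/2$-convexity under uniform limits (the inf in the Definition passes to the limit, cf.\ the standard facts cited from \cite{McCann01, Villani08}) gives $\varphi_\infty\in\K$, and the uniform gradient bound with weak-$*$ compactness in $L^2$ identifies a limit for $\nabla\varphi_n$ that must equal $\nabla\varphi_\infty$. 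Running the already-proved implications (i)$\Rightarrow\cdots\Rightarrow$(v) along this subsequence shows $\chi(\Upsilon(\varphi_\infty))$ is the weak limit of $\mu_n$, hence equals $\mu$; by the bijectivity asserted in Propositions 2.4--2.6, $\varphi_\infty=\varphi$ in $\tilde\K$. Since every subsequence has a further subsequence converging in $\tilde H^1$ to the same limit $\varphi$ (strong convergence, not merely weak, because $\|\nabla\varphi_n\|_{L^2}\to\|\nabla\varphi\|_{L^2}$ — this needs a short argument, e.g.\ via the identity $\int d^2(g_n,\mathrm{id})\,dm\to\int d^2(g,\mathrm{id})\,dm$ coming from (iv) together with lower semicontinuity), the full sequence converges, giving (i). The main obstacle is precisely this last point: extracting a genuinely convergent subsequence and promoting weak $L^2$ convergence of the gradients to strong convergence; everything else is soft, driven by the boundedness of $d$ on the compact manifold $M$.
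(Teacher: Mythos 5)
Your soft directions (i)$\Rightarrow$(ii)$\Rightarrow$(iii)$\Rightarrow$(iv)$\Rightarrow$(v) are fine and agree in substance with the paper (which proves (i)$\Leftrightarrow$(ii) in one stroke via a two-sided bi-Lipschitz estimate for $\exp$ on the ball of radius $D$ in $TM$, and settles the hard return direction (iv)$\Rightarrow$(ii) by citing the stability theorem for optimal transport maps, [Vi08, Cor.\ 5.21]). The genuine gap is in your closing step (v)$\Rightarrow$(i). After Arzel\`a--Ascoli you have, along a subsequence, only uniform convergence $\varphi_{n_k}\to\varphi_\infty$ and \emph{weak} $L^2$ convergence $\nabla\varphi_{n_k}\rightharpoonup\nabla\varphi_\infty$. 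At that point you ``run the already-proved implications (i)$\Rightarrow\cdots\Rightarrow$(v) along this subsequence'' to conclude that $\bigl(\exp(\nabla\varphi_\infty)\bigr)_*m$ is the weak limit of $\mu_{n_k}$; but (i) along the subsequence means \emph{strong} $\tilde H^1$ convergence to $\varphi_\infty$, which is exactly what you do not yet have, and weak convergence of the gradients does not let you pass to the limit in the nonlinear push-forward $\bigl(\exp(\nabla\varphi_{n_k})\bigr)_*m$: the map $\varphi\mapsto(\exp\nabla\varphi)_*m$ is not continuous for weak convergence of $\nabla\varphi$. So the identification $\varphi_\infty=\varphi$ is unsupported, and the argument is circular at its only hard point.

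What is missing is precisely the stability ingredient the paper imports from Villani. Two standard repairs: (a) use the convex-analysis fact that pointwise (here uniform) convergence of $d^2/2$-convex potentials forces $\nabla\varphi_{n_k}(x)\to\nabla\varphi_\infty(x)$ at a.e.\ point where $\varphi_\infty$ is differentiable; with the uniform bound $|\nabla\varphi_{n}|\le D$ this upgrades to strong $L^2$ convergence, and then your own chain identifies $(\exp\nabla\varphi_\infty)_*m=\mu$, hence $\varphi_\infty=\varphi$ by the bijectivity of $\Upsilon$ and $\chi$; or (b) pass to the limit in the Kantorovich duality (uniform convergence of $\varphi_{n_k}$, continuity of conjugation as in Lemma 3.5, weak convergence $\mu_{n_k}\to\mu$) to show $\varphi_\infty$ is an optimal potential for the pair $(m,\mu)$, which is the content of the cited Corollary 5.21. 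Once the limit is so identified, your final device --- strong convergence from weak convergence plus norm convergence, via $\|\nabla\varphi_n\|^2_{L^2}=\int_M d^2(x,g_n(x))\,m(dx)=d_W^2(m,\mu_n)\to d_W^2(m,\mu)=\|\nabla\varphi\|^2_{L^2}$ (note this identity itself uses that the optimal transport moves along minimal geodesics, so $d(x,g_n(x))=|\nabla\varphi_n(x)|$ a.e.) --- is correct and is a nice alternative to the paper's bi-Lipschitz argument for recovering (i) from (ii); but as written the proposal does not close the loop.
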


\begin{proof}
$ (i)\Leftrightarrow (ii)$
Compactness of $M$ and smoothness of the exponential map imply that there exists $\delta > 0$ such that $ \forall x \in M$, $\forall v_1, v_2 \in T_x M $ with $\mid
v_1\mid, \mid v_2 \mid \leq D$ and $\mid v_1 - v_2 \mid < \delta$:
$$ \frac{1}{2} \leq d(exp_x v_1, exp_x v_2)/ \mid v_1 - v_2 \mid_{T_x M} \leq 2.$$
Hence, $\varphi_n \longrightarrow  \varphi$ in $\tilde H^1$, that is
$\int_M \mid \nabla \varphi_n(x) - \nabla \varphi(x) \mid_{T_x M}^2 m(dx) \longrightarrow 0$,
is equivalent to
$\int_M d^2(g_n(x),g(x)) m(dx) \longrightarrow 0$,
that is, to $g_n\longrightarrow  g$ in $L^2((M,m),(M,d))$.

$ (ii)\Leftrightarrow (iii)$
Standard fact from integration theory (taking into account that $d(g_n,g)$ is uniformly bounded due to compactness of $M$).

$ (ii)\Leftrightarrow (iv)$
If $\mu_n = (g_n)_\ast m$ and $\mu_n = g_\ast m$ then $(g_n,g)_\ast m$ is a coupling of $\mu_n$ and $\mu$. Hence,
\begin{equation} {\label {dW2}}
 d_W ^2 (\mu_n,\mu) \leq \int_M d^2(g_n(x),g(x)) m(dx).
\end{equation}

$ (iv)\Leftrightarrow (v)$
Trivial.

$ (ii)\Leftrightarrow (iv)$
\cite{Villani08}, Corollary 5.21.

\end{proof}

\begin{remark}\rm
 Since $M$ is compact, assertion (ii) of the previous Proposition is equivalent to

(iii') $g_n \longrightarrow g$ in $L^p ((M,m),(M,d))$

for any $p \in [1,\infty)$ and similarly, assertion (iv) is equivalent to

(iv') $\mu_n \longrightarrow \mu$ in $L^p$-Wasserstein distance.
\end{remark}

\begin{remark}\rm
 In $n=1$, the inequality in (\ref{dW2}) is actually an equality. In other words, the map
\begin{equation*}
 \chi : (\G,d_2)\rightarrow(\mathcal{P},d_W)
\end{equation*}
is an \emph{isometry}. This is no longer true in higher dimensions.
\end{remark}

The well-known fact (Prohorov's theorem) that the space of probability measures on a compact space is itself compact, together with the previous continuity results
immediately implies compactness of $\tilde \K$ and $\G$.

\begin{corollary}
 (i) $\tilde \K$ is a compact subset of $\tilde H^1$.

(ii) $\G$ is a compact subset of $L^2((M,m),(M,d))$.
\end{corollary}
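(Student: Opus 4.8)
The plan is to deduce this directly from the homeomorphisms established above, transporting compactness from $\mathcal{P}(M)$ rather than arguing inside $\tilde{H^1}$ or $L^2$. The single external input is that $(M,d)$ is a compact metric space, being a compact subset of the complete Riemannian manifold $\hat M$; hence, by Prohorov's theorem, $\mathcal{P}(M)$ equipped with the weak topology is compact, and it is moreover metrizable (for instance by the L\'evy--Prohorov metric, or by the Wasserstein distance $d_W$).

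First I would observe that, by the earlier propositions, $\Upsilon:\tilde{\K}\to\G$ and $\chi:\G\to\mathcal{P}$ are bijections, and that the preceding Proposition (equivalence of (i), (ii) and (v)) identifies, along sequences, convergence in the subspace topology of $\tilde{\K}\subset\tilde{H^1}$, convergence in the subspace topology of $\G\subset L^2((M,m),(M,d))$, and weak convergence in $\mathcal{P}(M)$. Since all three ambient spaces are metric, sequential continuity equals continuity, so $\Upsilon$, $\chi$ and $\chi\circ\Upsilon$ are homeomorphisms for these topologies. Then (ii) follows because $\G=\chi^{-1}\big(\mathcal{P}(M)\big)$ is the continuous image of a compact space, hence compact; being compact in the Hausdorff space $L^2((M,m),(M,d))$, it is in particular closed there, so it is a compact \emph{subset} of $L^2((M,m),(M,d))$ in the asserted sense. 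For (i) one argues identically with the homeomorphism $\chi\circ\Upsilon:\tilde{\K}\to\mathcal{P}(M)$, or simply notes that $\tilde{\K}=\Upsilon^{-1}(\G)$ is the continuous image of the compact set $\G$: $\tilde{\K}$ is compact, hence closed in $\tilde{H^1}$.

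There is essentially no obstacle here: all the real work sits in the bijection and continuity propositions above. The only point deserving a word of care is the passage from the \emph{sequential} statement of the preceding Proposition to an assertion about topologies; this is legitimate precisely because $\tilde{H^1}$, $L^2((M,m),(M,d))$ and $\mathcal{P}(M)$ (with the weak topology) are metrizable, so continuity, closedness and compactness may all be tested along sequences. Alternatively, one could try to prove compactness of $\tilde{\K}$ by hand from the Lemma above (uniform Lipschitz and gradient bounds), extracting via Arzel\`a--Ascoli a uniformly convergent subsequence from a bounded family of $d^2/2$-convex functions; but that route still demands control of the gradients in $\tilde{H^1}$, so passing through $\mathcal{P}(M)$ and Prohorov's theorem is the economical path.
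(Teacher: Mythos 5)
Your argument is correct and follows the paper's own route: the paper likewise obtains compactness of $\tilde\K$ and $\G$ by combining Prohorov's theorem (compactness of $\mathcal{P}(M)$) with the equivalences of Proposition 2.5, which make $\Upsilon$ and $\chi$ homeomorphisms. Your additional remarks on metrizability and sequential continuity only make explicit what the paper leaves implicit.
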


\section{The Conjugation Map}

Let us recall the definition of the conjugation map $ \Conj_\K : \varphi \mapsto \varphi \cc$
acting on functions $\varphi : M\rightarrow \R$ as follows
\begin{equation*}
 \varphi\cc (x) = -\underset{y\in M}{\inf} \left[ \frac{1}{2} d^2(x,y) + \varphi(y) \right] .
\end{equation*}
The map $\Conj_\K$ maps $\K$ bijective onto itself with
$\Conj^2 _\K = Id$. For each $\lambda \in \R$, $\Conj_\K (\varphi + \lambda) = \Conj_\K (\varphi) - \lambda$.
Hence, $\Conj _\K$ extends to a bijection $\Conj_{\tilde \K}: \tilde \K \rightarrow \tilde \K$.
Composing this map with the bijections $\chi:\G \rightarrow \mathcal{P}$ and $\Upsilon : \tilde \K \rightarrow \G$ we obtain involutive bijections
\begin{equation*}
 \Conj_\G = \Upsilon \circ \Conj_{\tilde \K} \circ \Upsilon^{-1}:\G \rightarrow \G
\end{equation*}
and
\begin{equation*}
  \Conj_\mathcal{P} = \chi \circ \Conj_{\G} \circ \chi^{-1}:\mathcal{P} \rightarrow \mathcal{P},
\end{equation*}
called \emph{conjugation map} on $\G$ or on $\mathcal{P}$, respectively.
Given a monotone map $g \in \G$, the monotone map
\begin{equation*}
 g\cc := \Conj_\G (g)
\end{equation*}
will be called \emph{conjugate map} or \emph{generalized inverse map}; given a probability measure $\mu \in \mathcal{P}$ the probability measure
\begin{equation*}
 \mu\cc := \Conj_\mathcal{P} (\mu)
\end{equation*}
will be called \emph{conjugate measure}.

\begin{example}
 (i) Let $M = S^n$ be the $n$-dimensional sphere, and $m$ be the normalized Riemannian volume measure.
Put $$\mu= \lambda \delta_a + (1-\lambda)m$$ for some point $a \in M$ and $\lambda \in \enspace ]0,1[$. Then $$\mu \cc = \frac{1}{1-\lambda} 1_{M\setminus B_r(a)}
\cdot m$$
where $r>0$ is such that $m(B_r(a)) = \lambda$.

{\footnotesize\rm [ Proof. The optimal transport map $g=\exp(\nabla\varphi)$ which pushes $m$ to $\mu$ is determined by the $d^2/2$-convex function
$$\varphi=\left\{\begin{array}{ll}
\frac12\left[r^2-d^2(a,x)\right]\, &\quad\mathrm{in } \  B_r(a)\\
\frac{r}{2(\pi-r)}\left[d^2(a',x)-(\pi-r)^2\right]\, &\quad\mathrm{in } \ \overline B_{\pi-r}(a')=M\setminus B_r(a)
\end{array}\right.$$
Its conjugate is
the function
$$\varphi\cc(y)=-\frac{r}{2\pi}d^2(a',y)+\frac12r(\pi-r). \quad ]$$
}

\begin{flushleft}
\includegraphics[width=0.337\textwidth]{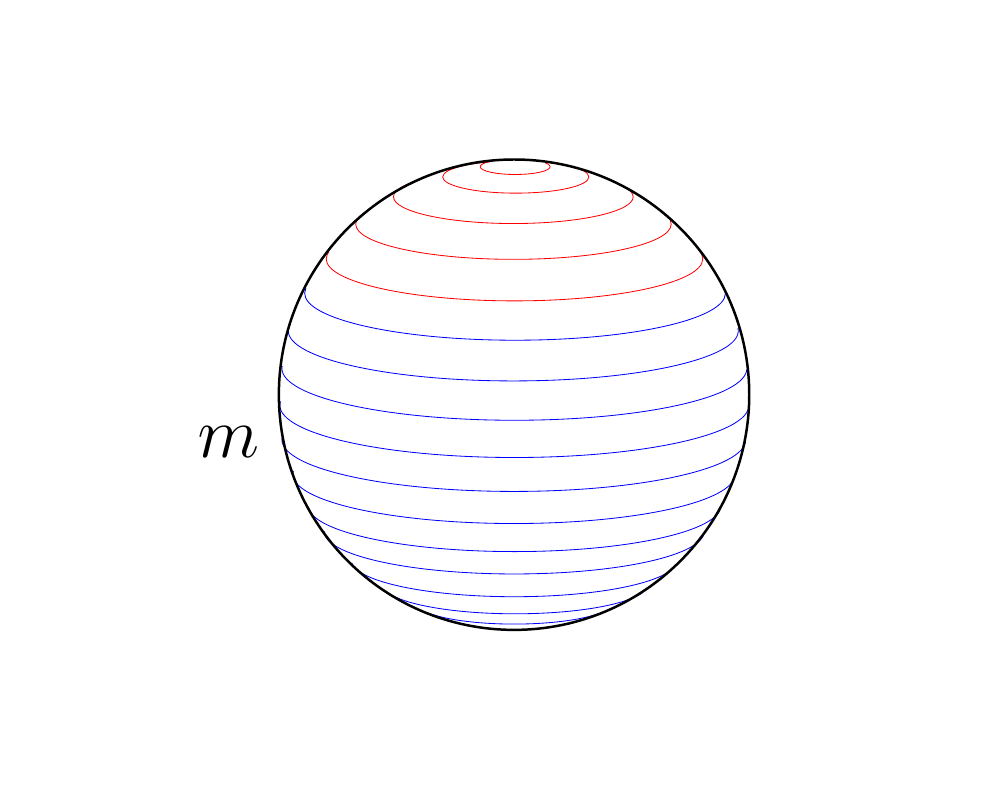} \vspace{-0.5cm} \hspace{-0.49cm}
\includegraphics[width=0.331\textwidth]{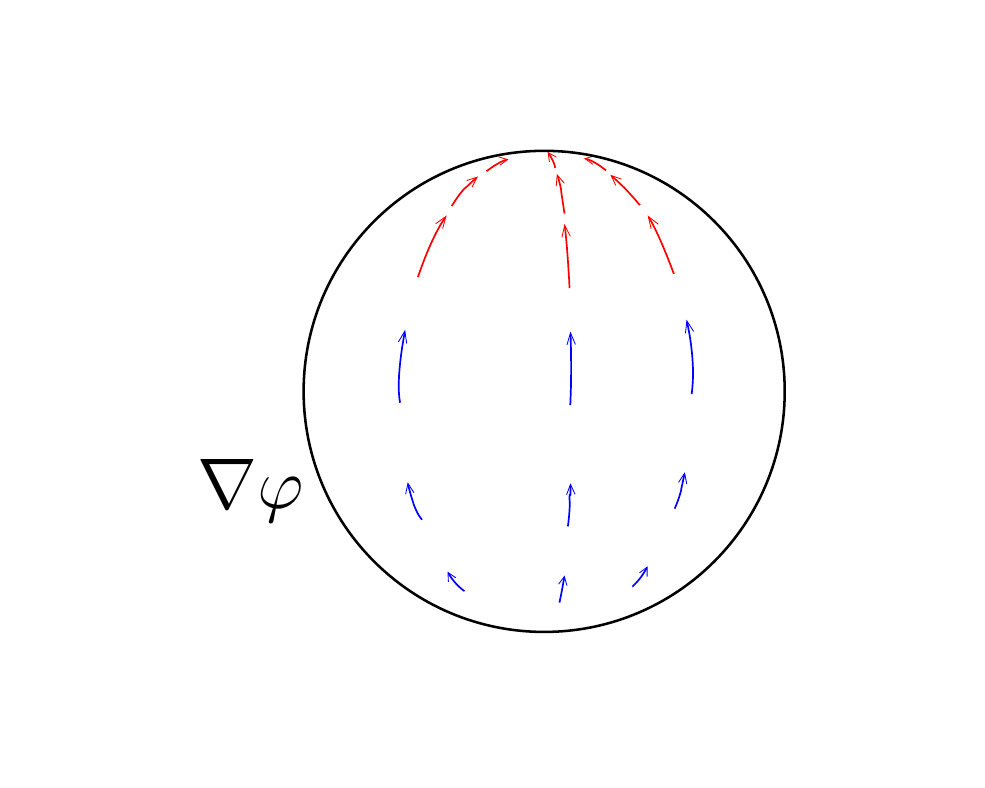}
\includegraphics[width=0.328\textwidth]{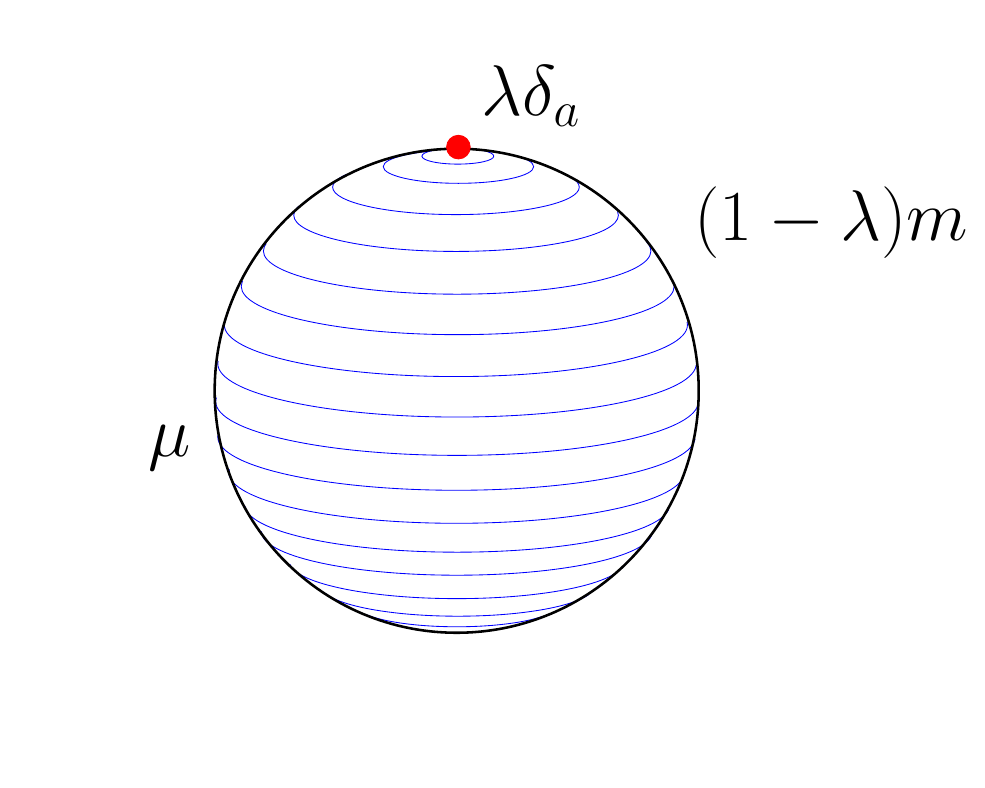}
\includegraphics[width=0.334\textwidth]{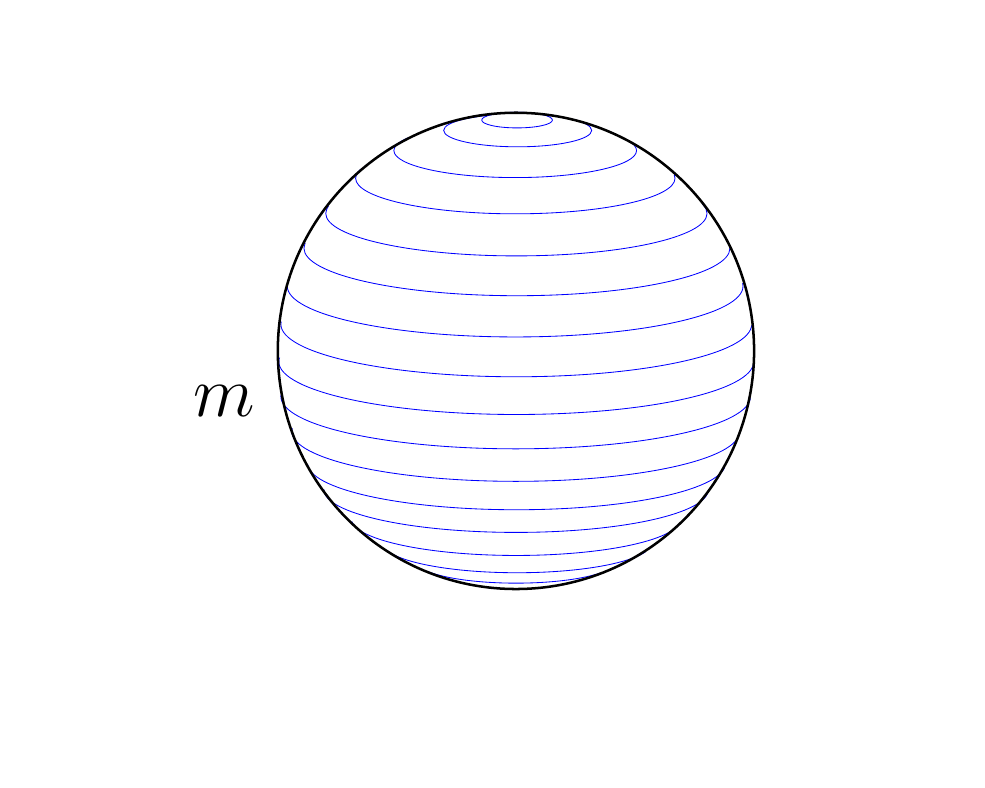} \hspace{-0.52cm}
\includegraphics[width=0.330\textwidth]{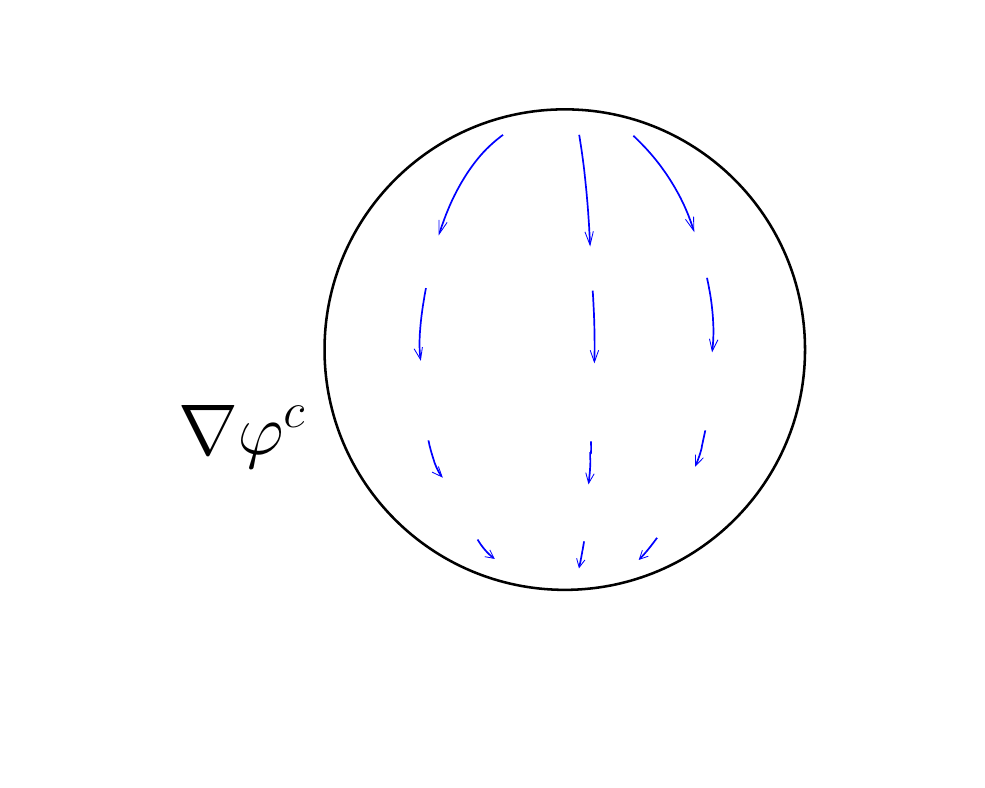} \hspace{-0.4cm}
\includegraphics[width=0.334\textwidth]{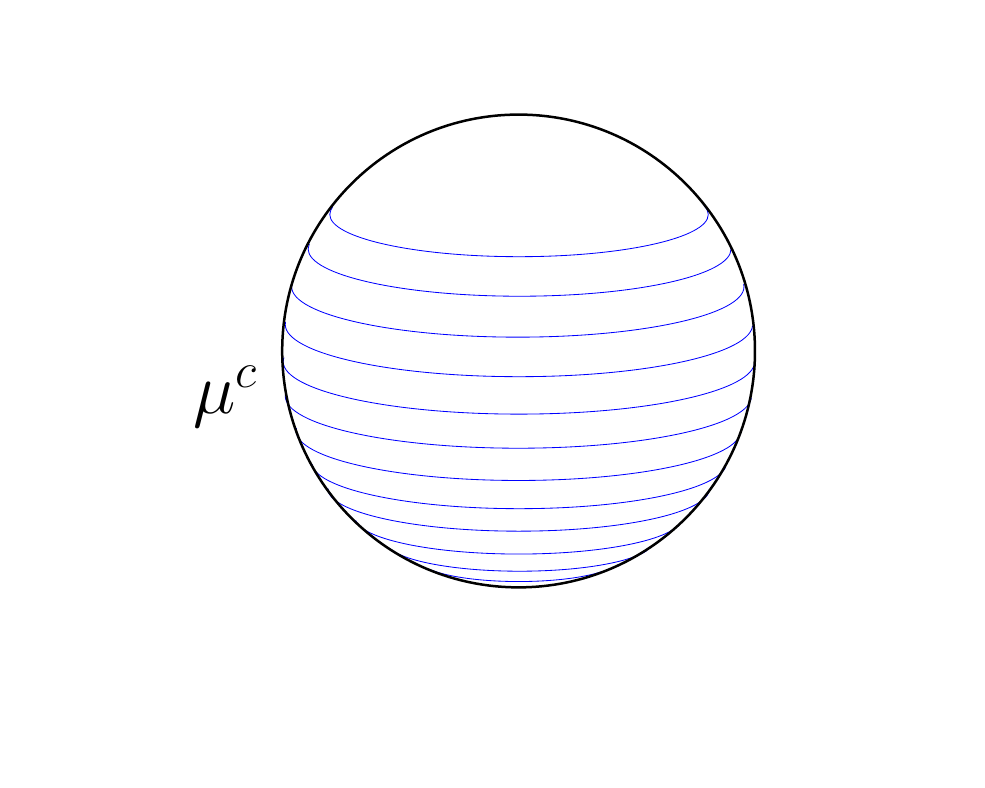}
\end{flushleft}

 (ii) Let $M= S^n$, the $n$-dimensional sphere, and $\mu=\delta_a$ for some $a\in M$. Then $\mu\cc =\delta_{a'}$ with $a'\in M$ being the antipodal point of a.

{\footnotesize\rm [ Proof. Limit of (i) as $\lambda\nearrow 1$. Alternatively: explicit calculations with
$\varphi(x)=\frac12 [\pi^2-d^2(a,x)]$ and
\begin{align*}
\varphi\cc(y)=\sup_x\left(-\frac12 d^2(x,y)+\frac12 d^2(a,x)-\frac12\pi^2\right)=-\frac12 d^2(a',y).\quad]\end{align*}}

\medskip

(iii) Let $M= S^n$, the $n$-dimensional sphere, and $\mu=\frac12\delta_a+\frac12\delta_{a'}$ with north and south pole $a,a'\in M$. Then $\mu\cc$ is the uniform
distribution on the equator, the $(n-1)$-dimensional set $Z$ of points of equal distance to $a,a'$.

\medskip
(iv) Let $M= S^1$ be the circle of length 1, $m$ = uniform distribution and
$$\mu = \sum_{i=1}^{k} \alpha_i \delta_{x_i}$$
with points $x_1< x_2 < \ldots< x_k< x_1$ in cyclic order on $S^1$ and numbers $\alpha_i \in [0,1]$, $\sum \alpha_i =1$. Then $$\mu \cc= \sum_{i=1}^{k} \beta_i
\delta_{y_i}$$
with $\beta_i = \mid x_{i+1}- x_i \mid$ and points $y_1< y_2 < \ldots< y_k< y_{k+1}= y_1$ on $S^1$ satisfying $$\mid y_{i+1}-y_i\mid = \alpha_{i+1}.$$
{\footnotesize\rm  [ Proof. Embedding in $\mathbb{R}^1$ and explicit calculation of distribution and inverse distribution functions. ]}
\end{example}

\begin{remark}\label{two-m}\rm
 The conjugation map
\begin{equation*}
 \Conj_\mathcal{P} : \mathcal{P}\rightarrow \mathcal{P}
\end{equation*}
depends on the choice of the reference measure $m$ on $M$. Actually, we can choose two different probability measures $m_1$, $m_2$ and consider
$  \Conj_\mathcal{P} = \chi_{m_2} \circ \Conj_{\G} \circ \chi^{-1}_{m_1}$.
\end{remark}

\begin{proposition}
 Let $\mu =g_\ast m \in \mathcal{P}$ be absolutely continuous with   density $\eta = \frac{d\mu}{dm}$. Put $f= g\cc$ and $\nu =
 f_\ast m = \mu\cc$.

(i) If $\eta>0$ a.s. then the measure $\nu$ is absolutely continuous with  density $\rho = \frac{d\nu}{dm}>0$ satisfying
\begin{equation*}
 \eta(x) \cdot \rho(f(x)) = \rho(x) \cdot \eta(g(x)) = 1 \qquad \text{ for a.e. } x \in M.
\end{equation*}
(ii) If $\nu$ is absolutely continuous then $f(g(x)) = g(f(x)) =x$ for a.e. $x\in M$.

(iii) Under the previous assumption the Jacobian $\det Df(x)$ and $\det Dg(x)$ exist for almost every $x \in M$ and satisfy
\begin{equation*}
 \det D f(g(x)) \cdot \det D g(x) = \det D f(x) \cdot \det D g(f(x)) = 1,
\end{equation*}
\begin{equation*}
\sigma(x)\cdot\eta(x) = {\sigma(f(x))}\cdot \det Df(x),
\qquad
\sigma(x)\cdot\rho(x) = {\sigma(g(x))}\cdot\det Dg(x)
\end{equation*}
for almost every $x \in M$ where $\sigma = \frac{dm}{d {\mathrm vol}}$ denotes the density of the reference measure $m$ with respect to the Riemannian volume
measure $\vol$.
\end{proposition}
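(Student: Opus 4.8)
\medskip
\noindent\emph{Proof strategy.}
The three assertions will all be derived from one fact $(\star)$ of optimal transport theory (see \cite{McCann01, Villani08}): \emph{if $\mu = g_\ast m$ is absolutely continuous, then $f = g\cc$ satisfies $f\circ g = \mathrm{id}$ $m$-almost everywhere.} I first recall why. Writing $g=\exp(\nabla\varphi)$ and $f=\exp(\nabla\varphi\cc)$ with $\varphi\in\tilde\K$, the coupling $(\mathrm{id},g)_\ast m$ of $m$ and $\mu$ is the (unique) optimal plan, hence concentrated on the contact set $\Gamma=\{(x,y):\varphi(x)+\varphi\cc(y)=-\frac12 d^2(x,y)\}$. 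Since ${\varphi\cc}\cc=\varphi$, a pair $(x,y)$ lies in $\Gamma$ precisely when $y$ minimises $z\mapsto\frac12 d^2(x,z)+\varphi\cc(z)$; differentiating this relation at an interior point at which $\varphi\cc$ is differentiable yields $x=\exp_y(\nabla\varphi\cc(y))=f(y)$. As $\varphi\cc$ is $d^2/2$-convex it is Lipschitz, so its non-differentiability set $D$ is $\vol$-null, whence $\mu(D)=0$ because $\mu\ll\vol$; thus $y=g(x)$ is such a point for $m$-a.e.\ $x$, and $f(g(x))=x$ $m$-a.e. In particular $g$ is $m$-a.e.\ injective and $f_\ast\mu=f_\ast(g_\ast m)=(f\circ g)_\ast m=m$.

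\medskip
\noindent\emph{Part (i).}
With $\eta>0$ $m$-a.e., the measures $m$ and $\mu$ are mutually absolutely continuous and $dm=\eta^{-1}\,d\mu$. For a Borel set $A$,
\begin{equation*}
\nu(A)=(f_\ast m)(A)=\int_{f^{-1}(A)}\eta^{-1}\,d\mu .
\end{equation*}
If $m(A)=0$ then $\mu(f^{-1}(A))=(f_\ast\mu)(A)=m(A)=0$, so this integral over a $\mu$-null set vanishes and $\nu(A)=0$: hence $\nu\ll m$. Conversely, if $\nu(A)=0$ the same identity forces $\mu(f^{-1}(A))=0$ (as $\eta^{-1}>0$ $\mu$-a.e.), i.e.\ $m(A)=0$; so $m\ll\nu$ as well, and therefore $\rho=d\nu/dm$ exists and is strictly positive $m$-a.e. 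Since $\nu$ is now absolutely continuous, part (ii) (proved independently below) applies and gives $g\circ f=\mathrm{id}$ $m$-a.e.\ too. For the density identities I change variables: for Borel $h\ge 0$,
\begin{align*}
\int h\,d\nu &= \int h\circ f\,dm = \int(h\circ f)\,\eta^{-1}\,d\mu \\
&= \int h\big(f(g(z))\big)\,\eta(g(z))^{-1}\,dm(z) = \int h(z)\,\eta(g(z))^{-1}\,dm(z),
\end{align*}
using $\mu=g_\ast m$ and then $f\circ g=\mathrm{id}$. Hence $\rho(z)\,\eta(g(z))=1$ $m$-a.e.; inserting $z=f(x)$ (admissible for $m$-a.e.\ $x$ since $\nu\ll m$) and using $g(f(x))=x$ gives $\eta(x)\,\rho(f(x))=1$.

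\medskip
\noindent\emph{Parts (ii) and (iii).}
For (ii), the relation $f\circ g=\mathrm{id}$ $m$-a.e.\ is exactly $(\star)$ for $\mu=g_\ast m$, which is absolutely continuous by the hypothesis of the Proposition; and since $f\cc=(g\cc)\cc=g$ ($\Conj$ being an involution) and $\nu=f_\ast m$ is assumed absolutely continuous, $(\star)$ applied to $\nu$ gives $g\circ f=f\cc\circ f=\mathrm{id}$ $m$-a.e. For (iii), by McCann's regularity theorem \cite{McCann01} the optimal maps $g=\exp(\nabla\varphi)$ and $f=\exp(\nabla\varphi\cc)$ admit an approximate (Alexandrov) derivative $\vol$-a.e., with well-defined nonnegative Jacobians, and satisfy the Monge--Amp\`ere equations: $g$ pushing $m=\sigma\,\vol$ onto $\mu=\eta\sigma\,\vol$ and $f$ pushing $m=\sigma\,\vol$ onto $\nu=\rho\sigma\,\vol$ give, $m$-a.e.,
\begin{equation*}
\sigma(x)=\eta(g(x))\,\sigma(g(x))\,\det Dg(x),\qquad \sigma(x)=\rho(f(x))\,\sigma(f(x))\,\det Df(x).
\end{equation*}
Inserting $\eta(g(x))=\rho(x)^{-1}$ and $\rho(f(x))=\eta(x)^{-1}$ from (i) gives the two claimed identities $\sigma\rho=(\sigma\circ g)\,\det Dg$ and $\sigma\eta=(\sigma\circ f)\,\det Df$. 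Evaluating the second one at $g(x)$ and using $f(g(x))=x$ yields $\det Df(g(x))=\sigma(g(x))/(\sigma(x)\rho(x))=1/\det Dg(x)$, and replacing $x$ by $f(x)$ with $g(f(x))=x$ gives the remaining identity $\det Df(x)\,\det Dg(f(x))=1$.

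\medskip
\noindent\emph{Expected main obstacle.}
The one genuinely delicate point is $(\star)$ --- that $f=g\cc$ truly inverts $g$ in the $m$-a.e.\ sense. This rests on the ($\vol$-null) non-differentiability set of $\varphi\cc$ being $\mu$-negligible (which is where $\mu\ll\vol$ enters) and on the minimisers defining $\varphi\cc$ being attained in the interior (a boundary effect, again harmless because $\mu\ll\vol$ forces $\mu(\partial M)=0$); for (iii) one also invokes McCann's theorem on almost everywhere second-order differentiability of optimal maps. Everything else is a bookkeeping of push-forwards and changes of variables. Care is also needed to check that the reciprocity relations of (i) are available in the situation of (iii), i.e.\ that absolute continuity of $\nu$ --- together with that of $\mu$ --- already forces $\eta>0$ $m$-a.e., in parallel with the one-dimensional picture.
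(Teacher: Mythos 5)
Your proposal is correct and follows essentially the same route as the paper: part (i) is the same push-forward change-of-variables computation yielding $\rho = 1/(\eta\circ g)$, while (ii) and (iii) rest on exactly the Brenier--McCann facts (a.e.\ inverse property and Jacobian/Monge--Amp\`ere identities) that the paper simply cites. You merely supply the details the paper outsources to that citation (the contact-set argument for $f\circ g=\mathrm{id}$ $m$-a.e.\ and the mutual absolute continuity of $m$ and $\nu$), which is a welcome but not essentially different elaboration.
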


\begin{proof}
 (i) For each Borel function $v:M \rightarrow \R_+$
$$\int_M v \, d\nu = \int_M v\circ f \, dm = \int_M v\circ f \cdot \frac{1}{\eta} \, d\mu = \int_M v\circ f \cdot \frac{1}{\eta(g\circ f)} \, d\mu = \int_M v \cdot
\frac{1}{\eta \circ g} \,dm.$$

Hence, $\nu$ is absolutely continuous with respect to $m$ with density $\frac{1}{\eta \circ g}$.
Interchanging the roles of $\mu$ and $\nu$ (as well as $f$ and $g$) yields the second claim.

(ii), (iii) Part of Brenier- McCann representation result of optimal transports.
\end{proof}

\begin{corollary}
 Under the assumption $\eta>0$ of the previous Proposition:
\begin{equation*}
 \Ent(\mu\cc \mid m) = \Ent(m \mid \mu).
\end{equation*}
\end{corollary}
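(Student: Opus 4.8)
The plan is to show that \emph{both} relative entropies coincide with the single quantity $-\int_M\log\eta\,dm$.

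First I would rewrite the right-hand side $\Ent(m\mid\mu)$. Since $\eta>0$ holds $m$-a.e.\ and $\mu=\eta\cdot m$, the reference measure $m$ is absolutely continuous with respect to $\mu$ with $\tfrac{dm}{d\mu}=\tfrac1\eta$, whence
\[
  \Ent(m\mid\mu)=\int_M\log\frac{dm}{d\mu}\,d m=\int_M\log\frac1\eta\,d m=-\int_M\log\eta\,d m .
\]
This expression is legitimate in $[0,+\infty]$: the elementary inequality $\log t\le t-1$ shows that $-\log\eta$ is bounded below by the integrable function $1-\eta$ (whose $m$-integral is $0$), and Jensen's inequality gives that the value is nonnegative.

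Next I would treat $\Ent(\mu\cc\mid m)=\Ent(\nu\mid m)$, where $\nu=f_\ast m$ and $f=g\cc$. By the preceding Proposition, part (i), $\nu$ is absolutely continuous with density $\rho=\tfrac{d\nu}{dm}>0$, so
\[
  \Ent(\nu\mid m)=\int_M\rho\log\rho\,d m=\int_M\log\rho\,d\nu .
\]
Since $\nu=f_\ast m$, I can push this integral back onto $(M,m)$ to get $\int_M\log\rho\,d\nu=\int_M\log\bigl(\rho(f(x))\bigr)\,m(dx)$, and then invoke the pointwise identity $\eta(x)\cdot\rho(f(x))=1$ from the same Proposition, valid for $m$-a.e.\ $x$. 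This yields $\log\rho(f(x))=-\log\eta(x)$ and therefore $\Ent(\nu\mid m)=-\int_M\log\eta\,d m=\Ent(m\mid\mu)$, as claimed.

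The only delicate points — and where I expect the (mild) main obstacle to lie — are measure-theoretic bookkeeping: one has to check that $\rho\circ f$ is well-defined $m$-a.e., that the sets $\{\eta=0\}$ and $\{\rho=0\}$ are negligible for the measures against which they are integrated so that the change of variables $\int\log\rho\,d\nu=\int(\log\rho)\circ f\,dm$ is valid, and that the final equality persists even in the case where both sides equal $+\infty$. All of this follows from $\eta>0$ (hence $\rho>0$) together with $\mu(\{\eta=0\})=\int_{\{\eta=0\}}\eta\,dm=0$ and the analogous statement for $\nu$. An equivalent route that bypasses $f$ altogether: insert the relation $\rho=1/(\eta\circ g)$ (the form in which the Proposition is actually proved) into $\int_M\rho\log\rho\,dm$ and change variables through $g_\ast m=\mu$; here the function $t\mapsto -t^{-1}\log t$ is bounded below on $(0,\infty)$, which makes the substitution rigorous, and one again arrives at $-\int_M\log\eta\,dm$.
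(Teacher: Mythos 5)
Your proof is correct and follows essentially the same route as the paper: the paper computes $\Ent(\mu\cc\mid m)=\int\rho\log\rho\,dm=\int\frac1{\eta\circ g}\log\frac1{\eta\circ g}\,dm=\int\frac1\eta\log\frac1\eta\,d\mu=\Ent(m\mid\mu)$, using the density identity of Proposition 3.3(i) and the change of variables $g_\ast m=\mu$, which is exactly the ``equivalent route'' you mention at the end; your main version merely pivots through $f$ and $\nu=f_\ast m$ instead of $g$ and $\mu$, with the same key ingredient. The extra integrability bookkeeping you supply is sound and only makes explicit what the paper leaves implicit.
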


\begin{proof}
 With notations from above
$$\Ent (\mu\cc \mid m) = \int \rho \log \rho \, dm = \int \frac{1}{\eta \circ g} \log \frac{1}{\eta \circ g} \, dm = \int \frac{1}{\eta} \log \frac{1}{\eta} \, d\mu
=
\Ent (m \mid \mu).$$
\end{proof}

\begin{lemma}
 The conjugation map $$\Conj_\K : \K \rightarrow \K$$ is continuous.
\end{lemma}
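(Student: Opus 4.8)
The plan is to show that if $\varphi_n \to \varphi$ in $H^1(M,m)$ then $\varphi_n\cc \to \varphi\cc$ in $H^1(M,m)$. The key point is that the family $\K$ is equi-Lipschitz and uniformly bounded (Lemma 2.2(ii)): every $d^2/2$-convex function has Lipschitz constant at most $D=\diam M$ and is bounded, say by $D^2/2$ plus the normalization. So first I would note that $H^1$-convergence on the compact space $M$ forces, along any subsequence, $m$-a.e. convergence $\varphi_{n_k}\to\varphi$; combined with the uniform Lipschitz bound and the fact that $m$ has full support, this upgrades to \emph{uniform} convergence $\varphi_{n_k}\to\varphi$ on $M$ (a.e.\ convergence of equi-Lipschitz functions on a set dense in the compact $M$ gives uniform convergence, by a standard Arzelà–Ascoli argument).

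Next I would show that $\varphi\mapsto\varphi\cc$ is continuous with respect to \emph{uniform} convergence. This is immediate from the explicit formula
$$\varphi\cc(x)=-\inf_{y\in M}\Bigl[\tfrac12 d^2(x,y)+\varphi(y)\Bigr],$$
since $\|\varphi_n-\varphi\|_\infty\le\varepsilon$ gives $\|\varphi_n\cc-\varphi\cc\|_\infty\le\varepsilon$ pointwise (the infimum of a uniformly perturbed family is perturbed by at most the same amount). Hence $\varphi_{n_k}\cc\to\varphi\cc$ uniformly, in particular in $L^2(M,m)$.

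The remaining, and main, obstacle is to promote convergence of the conjugates from $L^2$ to $H^1$, i.e.\ to control the gradients $\nabla\varphi_{n_k}\cc$. Here I would use two facts: (a) by Lemma 2.2(ii) all the $\nabla\varphi_{n_k}\cc$ are uniformly bounded in $L^\infty$, hence bounded in $L^2(M,m;TM)$, so along a further subsequence $\nabla\varphi_{n_k}\cc\rightharpoonup w$ weakly in $L^2$, and since $\varphi_{n_k}\cc\to\varphi\cc$ in $L^2$ the limit must be $w=\nabla\varphi\cc$; (b) to get \emph{strong} $L^2$ convergence of the gradients I would combine this weak convergence with convergence of the $H^1$-norms $\|\varphi_{n_k}\cc\|_{H^1}\to\|\varphi\cc\|_{H^1}$. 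The norm convergence is exactly where the structure of optimal transport enters: passing through the bijections $\Upsilon$ and $\chi$ of Section 2, $\varphi\cc$ corresponds to the conjugate monotone map $g\cc=\Conj_\G(g)$ and the measure $\mu\cc$, and $\|\nabla\varphi\cc\|_{L^2}^2$ is comparable (via the $\delta$-bi-Lipschitz estimate in the proof of Proposition 2.6) to $d_2(g\cc,\mathrm{id})^2=\int_M d^2(g\cc(x),x)\,dm$; since $\Conj_{\tilde\K}$ is an involution and uniform convergence $\varphi_{n_k}\cc\to\varphi\cc$ already yields weak convergence of $\mu_{n_k}\cc\to\mu\cc$ hence (Proposition 2.6) convergence of the associated maps in $L^2$, one recovers convergence of the relevant energies. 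Then weak convergence plus norm convergence in the Hilbert space $H^1$ gives strong convergence $\varphi_{n_k}\cc\to\varphi\cc$ in $H^1$. Since every subsequence of $\{\varphi_n\cc\}$ has a further subsequence converging to the same limit $\varphi\cc$ in $H^1$, the whole sequence converges, proving continuity.

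I expect the delicate step to be (b): establishing that the $H^1$-energy of the conjugate is continuous along the sequence, because it cannot be read off the sup-formula directly and must be routed through the homeomorphism between $\tilde\K$, $\G$ and $\mathcal P$ established in Proposition 2.6 together with the involutivity of the conjugation on $\tilde\K$. (Alternatively, one could bypass (b) entirely by invoking Proposition 2.6 and Corollary 2.9: $\tilde\K$ is compact, so $\Conj_{\tilde\K}$, being a continuous involution on a metrizable compact space once continuity in the weak/Wasserstein topology is known, transports back to $H^1$-continuity; but I would prefer the direct weak-plus-norm argument as it keeps the role of the uniform Lipschitz bound transparent.)
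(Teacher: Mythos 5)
Your reduction to uniform convergence is correct and is, in effect, a cleaner substitute for the paper's device: the paper approximates the infimum defining $\varphi\cc$ over a finite $\varepsilon$-net $\{y_1,\dots,y_k\}$ and uses the uniform Lipschitz bound $D$ to get this approximation uniformly close on all of $\K$, whereas you use Arzel\`a--Ascoli together with the fact that conjugation is a sup-norm contraction; either way one arrives at $\varphi_n\cc\to\varphi\cc$ uniformly, hence in $L^2$. The genuine gap is in your step (b), which you yourself flag as the delicate one. Your justification rests on the claim that uniform convergence $\varphi_{n_k}\cc\to\varphi\cc$ ``already yields'' weak convergence of the conjugate measures $\mu_{n_k}\cc\to\mu\cc$. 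That claim is precisely the crux and is nowhere proved: $\mu_{n_k}\cc=\bigl(\exp(\nabla\varphi_{n_k}\cc)\bigr)_*m$, so its weak convergence requires control of the gradients $\nabla\varphi_{n_k}\cc$, and by Proposition 2.5 ((v)$\Leftrightarrow$(i)) it is \emph{equivalent} to the $\tilde H^1$-convergence you are trying to establish --- so as written the argument is circular. Note moreover that if you did have that weak convergence, Proposition 2.5 ((v)$\Rightarrow$(i)) would finish the proof immediately, making the weak-gradient-plus-norm-convergence detour redundant; and the mechanism you propose for the norm convergence --- that $\|\nabla\varphi\cc\|_{L^2}^2$ is \emph{comparable} to $d_2(g\cc,\mathrm{id})^2$ via the two-sided bounds $1/2\le d(\exp_xv_1,\exp_xv_2)/|v_1-v_2|\le 2$ --- cannot in any case deliver convergence of $H^1$-norms, since two-sided bounds with multiplicative constants do not transfer limits of norms.

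The gap can be closed along the lines of your parenthetical remark, but not as you state it: continuity of $\Conj_\mathcal{P}$ in the weak topology is Theorem 3.6, which the paper deduces \emph{from} this lemma, so you may not invoke it. What does work is Corollary 2.8, whose proof uses only Prohorov's theorem and Proposition 2.5 and is therefore available here: $\tilde\K$ is compact in $\tilde H^1$. Given $\varphi_n\to\varphi$ in $H^1$, your steps (1)--(2) give $\varphi_n\cc\to\varphi\cc$ uniformly; every subsequence of the classes of $\varphi_n\cc$ in $\tilde\K$ has a sub-subsequence converging in $\tilde H^1$ to some $\tilde\psi\in\tilde\K$; by the Poincar\'e inequality the zero-mean representatives then converge in $L^2$, hence a.e.\ along a further subsequence, and the uniform convergence identifies $\tilde\psi$ as the class of $\varphi\cc$; hence the whole sequence converges in $\tilde H^1$, and since $\int\varphi_n\cc\,dm\to\int\varphi\cc\,dm$ by uniform convergence, also in $H^1$. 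An alternative fix is to use that the $\varphi_n\cc$ are uniformly semiconvex (suprema of the uniformly semiconvex functions $x\mapsto-\tfrac12 d^2(x,y)-\varphi_n(y)$), so uniform convergence forces $\nabla\varphi_n\cc\to\nabla\varphi\cc$ a.e., and conclude by dominated convergence with the bound $D$. For comparison, the paper itself passes from $L^2$-convergence of the conjugates to $H^1$-convergence very tersely, citing only the uniform gradient bound; your instinct that this last step needs a real argument is sound, but the argument you supply does not provide one.
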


\begin{proof}
To simplify notation denote $\Conj_\mathcal{K}$ by $\Conj$.
 Choose a countable dense set $\left\lbrace y_i\right\rbrace_{i \in \mathbb{N}}$ in $M$ and for $k \in \mathbb{N}$ define $\Conj_k :\varphi \mapsto \varphi_k
 \cc$ on $\K$ by $\varphi_k \cc (x) = - \underset{i=1,\ldots,k}{\inf} [\frac{1}{2} d^2(x,y_i)+(\varphi(y_i) ]$.
Then as $k\rightarrow\infty$ $$\varphi_k \cc \nearrow \varphi\cc \qquad \text{ pointwise on M} .$$
Recall that each $\varphi \in \K$ is Lipschitz continuous with Lipschitz constant $D$.

For each $\varepsilon > 0$ choose $k=k(\varepsilon) \in \mathbb{N}$ such that the set $\left\lbrace y_i\right\rbrace _{i=1,\ldots,k(\varepsilon)}$ is an
$\varepsilon$-covering of the compact space $M$. Then
\begin{align*}
&\mid \Conj_k(\varphi)(x) - \Conj(\varphi)(x) \mid  \, \leq
\underset{y \in M}{\sup} \underset{i=1,\ldots,k}{\inf} \mid \frac{1}{2} d^2 (x,y) - \frac{1}{2} d^2(x,y_i) + \varphi(y) - \varphi(y_i) \mid \, \\
&\leq \underset{y \in M}{\sup} \underset{i=1,\ldots,k}{\inf} 2 D\cdot d(y,y_i) \leq 2 D \varepsilon \qquad \textmd{ uniformly in } x \in M \textmd{ and } \varphi \in
\K.
\end{align*}

Now let us consider a sequence $(\varphi_l)_{l \in \mathbb{N}}$ in $\K$ with $\varphi_l \rightarrow \varphi$ in $H^1(M)$. Then for each $k \in \mathbb{N}$ as
$l\rightarrow\infty$
$$\Conj_k (\varphi_l) \rightarrow \Conj_k (\varphi)$$ pointwise on $M$ and thus also in $L^2(M)$.
Together with the previous uniform convergence of $\Conj_k \rightarrow \Conj$ it implies $$ \Conj(\varphi_l) \rightarrow \Conj(\varphi)$$ in $L^2 (M)$ as
$l\rightarrow\infty$. Moreover, we know that $\left\lbrace \Conj (\varphi_l)\right\rbrace _{l\in \mathbb{N}}$ is bounded in $H^1(M)$ (since all gradients are bounded
by $D$).
Therefore, finally $$\Conj(\varphi_l)\rightarrow \Conj (\varphi)$$ in $H^1(M)$ as $ l\rightarrow \infty$. This proves the continuity of $\Conj : \K\rightarrow\K$
with
respect to the $H^1$-norm.
\end{proof}

\begin{theorem}
 The conjugation map $$\Conj_\mathcal{P} :\mathcal{P}\rightarrow \mathcal{P}$$ is continuous (with respect to the weak topology).
\end{theorem}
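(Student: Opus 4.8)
The plan is to deduce continuity of $\Conj_\mathcal{P}$ from the continuity of $\Conj_\K$ established in the previous Lemma, by transporting it through the two canonical homeomorphisms $\Upsilon$ and $\chi$. Recall that
\[
\Conj_\mathcal{P}\;=\;\chi\circ\Upsilon\circ\Conj_{\tilde\K}\circ\Upsilon^{-1}\circ\chi^{-1},
\]
and that, by the Proposition listing the five equivalent notions of convergence, $\Upsilon:\tilde\K\to\G$ and $\chi:\G\to\mathcal P$ are homeomorphisms once $\tilde\K$ carries the $\tilde H^1$-topology, $\G$ the $L^2((M,m),(M,d))$-topology, and $\mathcal P$ the weak topology. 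Hence it suffices to show that the induced conjugation $\Conj_{\tilde\K}:\tilde\K\to\tilde\K$ is continuous with respect to the $\tilde H^1$-topology; composing with $\chi$, $\Upsilon$ and their (continuous) inverses then yields the theorem.

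To handle $\Conj_{\tilde\K}$ I would lift to $\K\subset H^1(M,m)$, where the Lemma applies. Note first that $\Conj_\K(\psi+\lambda)=\Conj_\K(\psi)-\lambda$ (and, more basically, adding a constant preserves $d^2/2$-convexity), so $\Conj_\K$ passes to equivalence classes with $\Conj_{\tilde\K}[\psi]=[\psi\cc]$. Now let $[\varphi_n]\to[\varphi]$ in $\tilde\K$ and choose the representatives $\varphi_n,\varphi\in\K$ normalized by $\int_M\varphi_n\,dm=\int_M\varphi\,dm=0$. Since $\varphi_n-\varphi$ then has $m$-mean zero, the Poincar\'e inequality assumed for $m$ gives $\int_M(\varphi_n-\varphi)^2\,dm\le c^{-1}\int_M|\nabla(\varphi_n-\varphi)|^2\,dm$, whence $\|\varphi_n-\varphi\|_{H^1}^2\le(1+c^{-1})\,\|\nabla(\varphi_n-\varphi)\|_{L^2}^2\to 0$; that is, $\varphi_n\to\varphi$ in $H^1(M,m)$. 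By the Lemma, $\varphi_n\cc\to\varphi\cc$ in $H^1(M,m)$, and a fortiori $[\varphi_n\cc]\to[\varphi\cc]$ in $\tilde H^1$. This is precisely the continuity of $\Conj_{\tilde\K}$.

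I do not expect a genuine obstacle here: the real analytic content — the uniform approximation of $\varphi\cc$ by the finite infima $\varphi_k\cc$, valid simultaneously for all $\varphi\in\K$ — was already carried out in the Lemma, while the Proposition packages the delicate equivalence between weak convergence of measures, $L^2$-convergence of Brenier maps and $\tilde H^1$-convergence of potentials. The only place the standing hypotheses on $m$ enter (beyond what the Lemma needs) is the quotient step $\tilde\K=\K/\mathrm{const}$, where the Poincar\'e inequality ensures that on $m$-mean-zero functions the $H^1$- and $\tilde H^1$-norms are equivalent, so that $\tilde H^1$-convergence can be realized by $H^1$-convergence of suitable representatives. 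One could instead argue directly on $\G$, writing $\Conj_\G(g)=\exp\big(\nabla(\,\Upsilon^{-1}(g)\,)\cc\big)$ and invoking $(i)\Leftrightarrow(ii)$ of the Proposition in both directions, but routing the argument through $\K$ is cleaner, since that is the space on which the conjugation is manifestly continuous.
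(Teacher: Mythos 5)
Your argument is correct and follows essentially the same route as the paper: it reduces continuity of $\Conj_\mathcal{P}$ to continuity of $\Conj_{\tilde\K}$ via the homeomorphisms $\Upsilon$ and $\chi$ from Proposition 2.5, and then obtains the latter from the $H^1$-continuity of $\Conj_\K$ (the Lemma) together with the Poincar\'e inequality, which the paper phrases as continuity of the section $\tilde H^1\to H^1$, $\tilde\varphi\mapsto\varphi-\int_M\varphi\,dm$, and you phrase equivalently via mean-zero representatives. No gaps.
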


\begin{proof} Let us first prove continuity of the conjugation map $\Conj_{\tilde \K}: \tilde \K \rightarrow \tilde \K$ (with respect to the $\tilde
H^1$-norm on $\tilde \K$).
Indeed, this follows from the previous continuity result together with the facts that the embedding $H^1\to\tilde H^1,\ \varphi\mapsto\tilde\varphi=\{\varphi+c:
c\in\mathbb{R}\}$ is continuous (trivial fact) and that the map $\tilde H^1\to H^1,\ \tilde\varphi=\{\varphi+c: c\in\mathbb{R}\}\mapsto\varphi-\int_M\varphi dm$ is
continuous (consequence of Poincar\'e inequality).

This in turn implies, due to Proposition 2.5, that the conjugation map $\Conj _\G : \G \rightarrow \G $ is continuous (with respect to the $L^2$-metric on $\G$).
Moreover, due to the same Proposition it therefore also implies that the conjugation map $$\Conj_\mathcal{P} :\mathcal{P}\rightarrow \mathcal{P}$$ is continuous (with
respect to the weak topology).
\end{proof}

\begin{remark}\rm
 In dimension $n=1$, the conjugation map $\Conj_\G :\G \rightarrow \G$ is even an isometry from $\G$, equipped with the $L^1$-metric, into itself.
\end{remark}

\section{Example: The Conjugation Map on $M\subset \mathbb{R}^n$}

In this chapter, we will study in detail the Euclidean case. We assume that $M$ is a compact convex subset of $\mathbb{R}^n$. (The convexity assumption is to simplify notations and results.) The probability measure $m$ is assumed to be absolutely continuous with full support on $M$. 

A function $\varphi:M\to \R$ is $d^2/2$-convex if and only if the function $\varphi_1(x)=\varphi(x)+|x|^2/2$  is {\em convex} in the usual sense:
$$
\varphi_1(\lambda x +(1-\lambda)y)\le\lambda \varphi_1(x)+(1-\lambda)\varphi_1(y)$$
(for all $x,y\in M$ and $\lambda\in[0,1]$) and if its subdifferential lies in $M$:
$$\partial\varphi_1(x)\subset M$$
for all $x\in M$.

A function $\psi$ is the conjugate of $\varphi$ if and only if the function $\psi_1(y)=\psi(y)+|y|^2/2$ is the {\em Legendre-Fenchel transform} of $\varphi_1$:
$$\psi_1(y)=\sup_{x\in M}\left[\langle x,y\rangle-\varphi_1(x)\right].$$
A Borel map $g:M\to M$ is {\em monotone} if and only if
$$\langle g(x)-g(y),x-y\rangle\ge0$$
for a.e. $x,y\in M$. Equivalently, $g$ is monotone if and only if $g=\nabla\varphi_1$ for some convex $\varphi_1:M\to\R$.

\begin{lemma}
(i)
If $\mu=\lambda\delta_z+(1-\lambda)\nu$ then there exists an open convex set $U\subset M$ with $m(U)=\lambda$ such that the optimal transport map $g$ with $g_*m=\mu$ satisfies
$g\equiv z$ a.e. on $U$.

(ii)
The conjugate measure $\mu\cc$ does not charge $U$:
$$\mu\cc(U)=0.$$
\end{lemma}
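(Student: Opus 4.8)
The plan is to exploit the Brenier–Fenchel picture from Chapter 4. Write $\mu=\lambda\delta_z+(1-\lambda)\nu$ and let $g=\nabla\varphi_1$ be its Brenier map, with $\varphi_1$ convex; then $\psi_1=\varphi_1^*$ (the Legendre–Fenchel transform, plus the quadratic normalization) is the convex potential of $f=g\cc=\nabla\psi_1$, and $\nu^{\cc}$-objects correspond to $\psi_1$. For part (i): since $g_*m=\mu$ has an atom of mass $\lambda$ at $z$, the set $U:=\{x\in M:\ g(x)=z\}$ (defined up to $m$-null sets) must have $m(U)=\lambda$. Convexity of $U$ is the standard fact that for a convex function $\varphi_1$ the set where the subdifferential contains a fixed vector $z$, i.e. $\{x:\ z\in\partial\varphi_1(x)\}$, is convex — it is exactly $\partial\psi_1(z)=\partial\varphi_1^*(z)$, which is a convex set. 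I would take $U$ to be the interior of this set; since $m$ has full support and is absolutely continuous, throwing away the boundary changes nothing, and I get an open convex $U$ with $m(U)=\lambda$ on which $g\equiv z$ a.e.

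For part (ii) the key point is that $f=g\cc$ maps $M$ (a.e.) into $M\setminus U$, so that $\mu\cc=f_*m$ assigns no mass to $U$. Concretely, $\nu\cc(U)=\mu\cc(U)=m(f^{-1}(U))$, so it suffices to show $f(x)\notin U$ for a.e. $x$. Now $f(x)=\nabla\psi_1(x)\in\partial\psi_1(x)$, and by convex duality $w\in\partial\psi_1(x)\iff x\in\partial\varphi_1(w)$. If $f(x)\in U$, i.e. $w:=f(x)$ lies in the (open) set where $\partial\varphi_1$ takes the single value $z$, then $\partial\varphi_1(w)=\{z\}$, forcing $x=z$. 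Since $m(\{z\})=0$, this happens only on an $m$-null set, whence $m(f^{-1}(U))=0$.

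The one technical subtlety I anticipate is making the "a.e." bookkeeping airtight: $g$ is only defined $m$-a.e., differentiability of $\varphi_1$ fails on a Lebesgue-null set, and I identify $U$ with $\operatorname{int}\partial\varphi_1^*(z)$ rather than the raw level set. I would address this by using Proposition 2.7(ii): because $\mu$ has an absolutely continuous part with positive density on $U$ (it contains a multiple of $\nu$? — no, the cleaner route is to note that $g$ restricted to $U$ is constant, so $f\circ g=\mathrm{id}$ $m$-a.e. need not hold there, but $g\circ f=\mathrm{id}$ $\nu$-a.e. still gives what we need). So the honest argument is purely on the potential side: $f(x)\in U$ implies $z\in\partial\varphi_1(f(x))$ with $\partial\varphi_1\equiv\{z\}$ near $f(x)$, implies (by strict monotonicity / the Fenchel equality $\varphi_1(w)+\psi_1(x)=\langle w,x\rangle$) that $x$ is the unique point of $\partial\psi_1^{-1}$ over $U$, which is $\{z\}$. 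Hence $f^{-1}(U)\subset\{z\}$ up to $m$-null sets, and $\mu\cc(U)=m(f^{-1}(U))=0$. The main obstacle is thus not a deep fact but the careful translation between the set-valued subdifferentials and the $m$-a.e. defined maps; once phrased via $\varphi_1,\psi_1$ and Fenchel's equality it is routine convex analysis.
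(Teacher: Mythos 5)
Your proof is correct, but for part (ii) it takes a genuinely different route from the paper's. For (i) the two arguments are essentially the same: both identify the contact set of $\varphi_1$ with the affine function $x\mapsto\langle z,x\rangle+\alpha$ (you describe it as $\partial\psi_1(z)=\{x:z\in\partial\varphi_1(x)\}$, the paper as the convex hull of the set where $\nabla\varphi_1=0$ after normalizing $z=0$), observe that it is convex with Lebesgue-null boundary, and take its interior as $U$. For (ii), however, the paper does not argue by duality at all: it interpolates along the displacement geodesic $\mu_\epsilon=(\epsilon\cdot Id+(1-\epsilon)g)_*m$ from $\mu$ to $m$, notes that each $\mu_\epsilon$ is absolutely continuous so that $g_\epsilon\cc=g_\epsilon^{-1}$ a.e., computes explicitly $\mu_\epsilon\cc(U)=m(g_\epsilon(U))=\epsilon^n\lambda$, and then lets $\epsilon\to0$ using the continuity of $\Conj_{\mathcal{P}}$ (Theorem 3.6) together with lower semicontinuity of $\nu\mapsto\nu(U)$ under weak convergence for open $U$. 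You instead show directly that $f=\nabla\psi_1$ carries almost no mass into $U$: for a.e.\ $x$ one has $f(x)\in\partial\psi_1(x)$, hence by Fenchel duality $x\in\partial\varphi_1(f(x))$, and $f(x)\in U$ forces $\partial\varphi_1(f(x))=\{z\}$ and thus $x=z$; so $f^{-1}(U)\subset\{z\}$ up to a Lebesgue-null set and $\mu\cc(U)=m(f^{-1}(U))=0$. Your argument is more elementary and self-contained -- it needs neither the continuity theorem for the conjugation map nor the approximation/portmanteau step, and it gives the slightly stronger pointwise conclusion $f^{-1}(U)\subset\{z\}$ a.e.; the paper's softer argument avoids all subdifferential bookkeeping and instead exploits the stability machinery (Proposition 2.5, Theorem 3.6) that the paper has already set up. One caveat you share with the paper: $m(U)=\lambda$ really uses that the atomic mass of $\mu$ at $z$ is exactly $\lambda$ (i.e.\ $\nu(\{z\})=0$); otherwise one only gets $m(U)=\mu(\{z\})\ge\lambda$, which is harmless for (ii).
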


\begin{proof} (i) Linearity of the problem allows to assume that $z=0$. Let $g=\nabla\varphi_1$ denote the optimal transport map with $\varphi_1$ being an appropriate convex function. Let $V$ be the subset of points in $M$ in which $\varphi_1$ is weakly differentiable with vanishing gradient. By the push forward property it follows that $m(V)=\lambda$. Firstly, then convexity of $\varphi_1$ implies that $\varphi_1$ has to be constant on $V$, say $\varphi_1\equiv\alpha$ on $V$. Secondly, the latter implies that $\varphi_1\equiv\alpha$ on the convex hull $W$ of $V$.
The interior $U$ of this convex set $W$ has volume $m(U)=m(W)\ge m(V)=\lambda$ and $\varphi_1$ is constant on $U$, hence, differentiable with vanishing gradient.
Thus finally $U\subset V$ and $m(U)=\lambda$.

(ii)
Let $\mu_\epsilon$, $\epsilon\in[0,1]$, denote the intermediate points on the geodesic from $\mu_0=\mu$ to $\mu_1=m$. Then $\mu_\epsilon=(g_\epsilon)_*m$ with $g_\epsilon=\exp((1-\epsilon)\nabla\varphi)=\epsilon\cdot Id + (1-\epsilon)\cdot g$ and each $\mu_\epsilon$ is absolutely continuous w.r. to $m$. Hence, $g_\epsilon\cc=g_\epsilon^{-1}$ a.e. on $M$.
Therefore, the conjugate measure $\mu\cc_\epsilon$ satisfies
$$\mu_\epsilon\cc(U)=m\left( (g_\epsilon\cc)^{-1}(U)\right)= m\left(g_\epsilon(U)\right)=\epsilon^n\cdot m(U)=\epsilon^n\cdot\lambda.$$
Now obviously $\mu_\epsilon\to\mu$ as $\epsilon\to0$. According to Theorem 3.6 this implies $\mu\cc_\epsilon\to\mu\cc$ and thus (since $U$ is open)
$$\mu\cc(U)\le\liminf_{\epsilon\to0}\mu_\epsilon\cc(U)=0.$$
\end{proof}

\begin{theorem} (i) If $\mu=\sum_{i=1}^N \lambda_i\delta_{z_i}$ with $N\in\mathbb{N}\cup\{\infty\}$ then there exist disjoint convex open sets $U_i\subset M$ with $m(U_i)=\lambda_i$
such that the optimal transport map $g=\nabla\varphi_1$ with $g_*m=\mu$ satisfies
$g\equiv z_i$ on each of the $U_i$, $i\in\mathbb{N}$.

The measure $\mu\cc$ is supported by the compact $m$-zero set $M\setminus \bigcup_{i=1}^NU_i$.

(ii)
Each of the sets  $U_i$ is the interior of $M\cap A_i$ where
$$A_i=\left\{x\in\R^n: \varphi_1(x)=\langle z_i,x\rangle + \alpha_i\right\}$$
and
$$\varphi_1(x)=\sup_{i=1,\ldots,N}\left[ \langle z_i,x\rangle + \alpha_i\right]$$
with numbers $\alpha_i$ to be chosen in such a way that $m(A_i)=\lambda_i$.

(iii)
If $N<\infty$ then each of the sets $A_i\subset\R^n$, $i=1,\ldots,N$ is a convex polytope. The decomposition $\R^n=\bigcup_{i=1}^N A_i$ is a Laguerre tesselation (see e.g. \cite{LY08} and references therein).

The compact $m$-zero set $M\setminus \bigcup_{i=1}^NU_i$ which supports $\mu\cc$ has finite
$(n-1)$- dimensional Hausdorff measure.
\end{theorem}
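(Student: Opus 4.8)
The plan is to exploit the semi-discrete structure of optimal transport: when the target $\mu$ is purely atomic, the Brenier potential of $\mu$ is, \emph{globally}, a supremum of affine functions, and its regions of affinity are the cells of a power diagram. First I would fix the optimal transport map $g=\nabla\varphi_1$ with $g_\ast m=\mu$ (so $\varphi_1$ convex, $\partial\varphi_1\subset M$), let $\psi_1$ be the Legendre--Fenchel transform of $\varphi_1$, and set $\alpha_i:=-\psi_1(z_i)$ --- uniformly bounded real numbers, since $\varphi_1$ is bounded and Lipschitz on the compact $M$. The Fenchel--Young inequality gives $\varphi_1(x)\ge\langle z_i,x\rangle+\alpha_i$ for all $x,i$, with equality exactly when $z_i\in\partial\varphi_1(x)$; by Brenier's theorem (see Section~2) $g(x)\in\partial\varphi_1(x)$ for $m$-a.e.\ $x$, and as $g_\ast m=\mu$ is carried by $\{z_1,z_2,\dots\}$, for $m$-a.e.\ $x$ some $z_i$ lies in $\partial\varphi_1(x)$. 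Hence $\varphi_1=\sup_i[\langle z_i,\cdot\rangle+\alpha_i]$ $m$-a.e., and since $\mathrm{supp}(m)=M$ and both sides are convex and continuous, the identity holds throughout $M$; I extend $\varphi_1$ to $\mathbb{R}^n$ by this formula.

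Next I would set $A_i:=\{x\in\mathbb{R}^n:\varphi_1(x)=\langle z_i,x\rangle+\alpha_i\}$ --- closed and convex, being a $0$-sublevel set of the nonnegative convex function $\varphi_1-\langle z_i,\cdot\rangle-\alpha_i$, with $M\subset\bigcup_iA_i$ --- and $U_i:=\mathrm{int}(M\cap A_i)$. On any open set contained in $U_i$ the function $\varphi_1$ agrees with the affine map $\langle z_i,\cdot\rangle+\alpha_i$, so $U_i\subset\mathrm{int}(A_i)$ (whence the $U_i$ are pairwise disjoint) and $g\equiv z_i$ on $U_i$. Thus $U_i\subset g^{-1}(\{z_i\})$; conversely, at every differentiability point $x$ with $g(x)=z_i$ one has $z_i=\nabla\varphi_1(x)\in\partial\varphi_1(x)$, i.e.\ $x\in A_i$, so $g^{-1}(\{z_i\})\subset A_i\cap M$ up to an $m$-null set. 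Since $A_i\cap M$ is a compact convex set, $(A_i\cap M)\setminus U_i=(A_i\cap M)\setminus\mathrm{int}(A_i\cap M)$ is Lebesgue-null (it is either lower-dimensional or the boundary of a convex body), hence $m$-null; combined with the push-forward identity $m(g^{-1}(\{z_i\}))=\lambda_i$ this yields $m(U_i)\le\lambda_i\le m(A_i\cap M)=m(U_i)$, forcing $m(U_i)=\lambda_i=m(A_i)$. By disjointness and countable additivity $m(\bigcup_iU_i)=\sum_i\lambda_i=1$, so $M\setminus\bigcup_iU_i$ is closed --- hence compact --- and $m$-null. This proves (i), and with it (ii), the numbers $\alpha_i$ being the ones just produced.

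For the support of $\mu\cc$ I would fix $j$, write $\mu=\lambda_j\delta_{z_j}+(1-\lambda_j)\nu_j$, and apply Lemma~4.1: it furnishes an open convex set $\widetilde U_j$, equal to the interior of the convex hull of $\{\nabla\varphi_1=z_j\}$, on which $g\equiv z_j$ and with $\mu\cc(\widetilde U_j)=0$; since $U_j$ is open and contained in $\{\nabla\varphi_1=z_j\}$ we get $U_j\subset\widetilde U_j$, hence $\mu\cc(U_j)=0$ and, summing, $\mu\cc(\bigcup_jU_j)=0$, i.e.\ $\mu\cc$ is carried by $M\setminus\bigcup_jU_j$. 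For $N<\infty$, the description $A_i=\bigcap_{j\neq i}\{x:\langle z_i-z_j,x\rangle\ge\alpha_j-\alpha_i\}$ exhibits $A_i$ as a finite intersection of half-spaces (a convex polyhedron), with $\mathbb{R}^n=\bigcup_iA_i$; completing the square, $\langle z_i-z_j,x\rangle\ge\alpha_j-\alpha_i\Leftrightarrow|x-z_i|^2-w_i\le|x-z_j|^2-w_j$ with $w_k:=|z_k|^2+2\alpha_k$, so $A_i$ is exactly the power cell of the weighted site $(z_i,w_i)$ and $\{A_i\}_{i=1}^N$ is the induced Laguerre tessellation (cf.\ \cite{LY08}). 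Finally, since $U_i\supset\mathrm{int}(M)\cap\mathrm{int}(A_i)$,
$$M\setminus\bigcup_{i=1}^NU_i\ \subset\ \partial M\ \cup\ \bigcup_{i=1}^N\partial A_i,$$
a finite union of subsets of the compact set $M$, each lying in a finite union of hyperplanes; hence $M\setminus\bigcup_iU_i$ --- which supports $\mu\cc$ --- has finite $(n-1)$-dimensional Hausdorff measure.

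The main obstacle is the first step: that for a purely atomic target the Brenier potential is a \emph{global} supremum of at most countably many affine functions rather than merely locally piecewise affine. This is the heart of the semi-discrete transport picture, and it rests on the subdifferential characterization of optimal maps recalled in Section~2 together with Legendre--Fenchel duality. Everything else is routine bookkeeping: boundaries of convex bodies are Lebesgue-null (hence $m$-null), polyhedral skeletons inside a bounded region are finite unions of flat pieces of finite $(n-1)$-dimensional Hausdorff measure, and the square-completion that identifies $A_i$ with a power cell is a one-line computation.
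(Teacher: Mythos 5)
The paper states this theorem without proof (only Lemma 4.1 before it and Corollary 4.3 after it come with arguments), so there is no authorial proof to compare against; your write-up supplies the missing argument, and it does so along the lines the surrounding text clearly intends: Fenchel--Young duality to show that the Brenier potential of a purely atomic target is globally a supremum of affine functions $\langle z_i,\cdot\rangle+\alpha_i$ with $\alpha_i=-\psi_1(z_i)$, the fact that boundaries of convex sets are Lebesgue-null to force $m(U_i)=\lambda_i$, Lemma 4.1 applied atom by atom (using the explicit description of its set $U$ as the interior of the convex hull of $\{\nabla\varphi_1=z_j\}$, so that $U_j\subset\widetilde U_j$ and hence $\mu\cc(U_j)=0$), and the square-completion identifying the $A_i$ with power cells of a Laguerre tessellation. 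I checked the steps and the argument is correct. Two small blemishes you should fix. First, the parenthetical claim $M\subset\bigcup_i A_i$ is never used outside the finite case and may fail for $N=\infty$, since a supremum of infinitely many affine functions need not be attained at every point; either drop it or restrict it to $N<\infty$. Second, in the final step you justify the finiteness of $\mathcal{H}^{n-1}\bigl(M\setminus\bigcup_i U_i\bigr)$ by saying each piece of $\partial M\cup\bigcup_i\partial A_i$ lies in a finite union of hyperplanes; that is true of the polyhedral boundaries $\partial A_i$, but not of $\partial M$ unless $M$ is itself a polytope. The conclusion nevertheless stands because the boundary of a compact convex body in $\R^n$ has finite $(n-1)$-dimensional Hausdorff measure (e.g.\ by monotonicity of surface area under inclusion of convex bodies), so the $\partial M$ term needs that standard fact rather than the hyperplane argument.
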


\begin{corollary}
 (i) If $\mu$ is discrete then the topological support of $\mu\cc$ is a $m$-zero set. In particular, $\mu\cc$ has no absolutely continuous part.

(ii) If $\mu$ has full topological support then $\mu \cc$ has no atoms.
\end{corollary}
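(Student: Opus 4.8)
The plan is to read off both statements of the Corollary as immediate consequences of Theorem 4.3, applied after a suitable approximation argument. For part (i), if $\mu$ is a finitely supported discrete measure $\mu=\sum_{i=1}^N\lambda_i\delta_{z_i}$, then Theorem 4.3(i) already tells us that $\mu\cc$ is supported on the compact $m$-zero set $M\setminus\bigcup_{i=1}^N U_i$; hence $\mu\cc$ is singular with respect to $m$, so it has no absolutely continuous part. For a general discrete $\mu$ with countably many atoms we may directly invoke the $N=\infty$ case of Theorem 4.3(i), which again exhibits a compact $m$-zero support for $\mu\cc$. The only subtlety is that the topological support of $\mu\cc$ might a priori be larger than the carrier set named in the theorem; but the carrier $M\setminus\bigcup U_i$ is closed, so it contains the topological support, and being an $m$-zero set this forces $\supp(\mu\cc)$ to be an $m$-zero set as well. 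Absence of an absolutely continuous part is then automatic.

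For part (ii), the idea is to dualize: by the involution property $\Conj_\mathcal{P}^2=\mathrm{Id}$, saying "$\mu\cc$ has no atoms" is equivalent to saying "if $\nu\cc$ is discrete then $\nu$ has no atoms" after setting $\nu=\mu\cc$. So suppose $\mu$ has full topological support and, for contradiction, that $\mu\cc$ has an atom, say $\mu\cc=\lambda\delta_z+(1-\lambda)\nu'$ with $\lambda>0$. Apply Lemma 4.2 to $\mu\cc$: there is a nonempty open convex set $U\subset M$ with $m(U)=\lambda>0$ such that the Brenier map of $\mu\cc$ is constant $\equiv z$ on $U$, and moreover $(\mu\cc)\cc(U)=0$, i.e. $\mu(U)=0$. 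But $U$ is a nonempty open subset of $M$, so $\mu(U)=0$ contradicts the assumption that $\mu$ has full topological support. Hence $\mu\cc$ has no atoms.

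I expect part (ii) to be the only place requiring genuine care, and the main point there is simply to organize the duality correctly: the conjugation map is an involution, so an atom of $\mu\cc$ produces, via Lemma 4.2(ii) applied to $\mu\cc$, an open set that $(\mu\cc)\cc=\mu$ must avoid, which is exactly the obstruction to full support. One should double-check that Lemma 4.2 is applicable — it is stated for an arbitrary decomposition $\mu=\lambda\delta_z+(1-\lambda)\nu$ with $\lambda\in(0,1)$, which is precisely the form an atom gives (the case $\lambda=1$, $\mu\cc=\delta_z$, is handled by passing to the limit or noting directly that then $\mu$ must be $m$ pushed forward by a constant map, impossible for $m$ with full support unless $M$ is a point). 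Part (i) is essentially a restatement of Theorem 4.3 and requires only the observation that a closed $m$-null carrier pins down the topological support.
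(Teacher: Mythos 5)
Your proof is correct and follows essentially the same route as the paper: part (i) is read off from the theorem on discrete measures (the closed $m$-null carrier contains the topological support), and part (ii) applies the lemma to $\mu\cc$ together with the involution $(\mu\cc)\cc=\mu$ to produce a nonempty open convex set of $\mu$-measure zero, contradicting full support. Your extra remark on the degenerate case $\mu\cc=\delta_z$ is a harmless refinement of the same argument.
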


\begin{proof}
(i) Obvious from the previous theorem.

(ii) If $\mu\cc$  had an atom (of mass $\lambda>0$) then according to the previous lemma there would be a convex open set $U$ (of volume $m(U)=\lambda$) such that $\mu(U)=(\mu\cc)\cc(U)=0$.
\end{proof}

\section{The Entropic Measure -- Heuristics}

Our goal is to construct a canonical probability measure $\mathbb{P}^\beta$ on the Wasserstein space
$\mathcal{P} =\mathcal{P}(M)$ over a compact Riemannian manifold, according to the formal ansatz
$$\mathbb{P}^\beta(d\mu) =\frac{1}{Z} e^{-\beta \, \Ent(\mu \mid m)} \mathbb{P}^0(d\mu).$$
Here $\Ent(\cdot \mid m)$ is the \emph{relative entropy} with respect to the reference measure $m$, $\beta$ is a constant $>0$ (`the inverse temperature') and
$\mathbb{P}^0$ should denote a (non-existing) `uniform distribution' on $\mathcal{P}(M)$. $Z$ should denote a normalizing constant. Using the conjugation map
$\Conj_\mathcal{P}: \mathcal{P}(M)\rightarrow \mathcal{P}(M)$ and denoting $\mathbb{Q}^\beta := (\Conj_\mathcal{P})_\ast \mathbb{P}^\beta$, $\mathbb{Q}^0 :=
(\Conj_\mathcal{P})_\ast \mathbb{P}^0$ the above problem can be reformulated as follows:

Construct a probability measure $\mathbb{Q}^\beta$ on $\mathcal{P}(M)$ such that -- at least formally --
\begin{equation}{\label{Qbeta}}
 \mathbb{Q}^\beta (d\nu) = \frac{1}{Z} e^{-\beta \, \Ent (m \mid \nu)} \mathbb{Q}^0(d\nu)
\end{equation}
with some `uniform distribution' $\mathbb{Q}^0$ in $\mathcal{P}(M)$.
Here, we have used the fact that $$\Ent (\nu\cc \mid m) = \Ent (m \mid \nu)$$ (Corollary 3.4), at least if $\nu \ll m$ with $\frac{d \nu}{dm}>0$ almost everywhere.

Probability measures ${\bf{P}} (d\mu)$ on $\mathcal{P}(M)$ -- so called \emph{random probability measures} on $M$ -- are uniquely determined by the distributions
${\bf{P}}_{M_1,\ldots,M_N}$ of the random vectors $$(\mu(M_1),\ldots,\mu(M_N))$$ for all $ N\in \mathbb{N}$ and all measurable partitions $M =
{\dot\bigcup}_{i=1}^N M_i$ of $M$ into disjoint measurable subsets $M_i$.
\medskip
Conversely, if a consistent familiy ${\bf{P}}_{M_1,\ldots,M_N}$ of probability measures on $[0,1]^N$ (for all $ N\in \mathbb{N}$ and all measurable partitions $M =
{\dot\bigcup}_{i=1}^N M_i $) is given then there exists a random probability measure ${\bf P}$ such that $$ {\bf P}_{M_1,\ldots,M_N} (A)= {\bf{P}}((\mu(M_1),\ldots,\mu(M_N)) \in
A)$$ for all measurable $A \subset [0,1]^N$, all $ N\in \mathbb{N}$ and all partitions $M = \overset{.}{\bigcup}_{i=1}^N M_i$.

Given a measurable partition $M = \overset{.}{\bigcup}_{i=1}^N M_i$ the ansatz (\ref{Qbeta}) yields the following characterization of the finite dimensional
distribution on $[0,1]^N$
\begin{equation}{\label{QbetaM}}
 \mathbb{Q}^\beta_{M_1,\ldots,M_N} (d x) = \frac{1}{Z_N} e^{-\beta S_{M_1,\ldots,M_N}(x)} q_{M_1,\ldots,M_N} (dx)
\end{equation}
where $S_{M_1,\ldots,M_N}(x)$ denotes the conditional expectation (with respect to $\mathbb{Q}^0$) of $S(\cdot) = \Ent (m \mid \cdot \,)$ under the condition
$\nu(M_1)=x_1, \ldots, \nu(M_N)=x_N$.

Moreover,  $q_{M_1,\ldots,M_N}(dx) = \mathbb{Q}^0 ((\nu(M_1), \ldots , \nu(M_N)) \in dx)$ denotes the distribution of the random vector
$(\nu(M_1), \ldots, \nu(M_N))$ in the simplex \\
$\sum_N = \left\lbrace x \in [0,1]^N : \sum_{i=1}^N x_i =1 \right\rbrace .$
According to our choice of $\mathbb{Q}^0$, the measure $q_{M_1,\ldots,M_N}$ should be the `uniform distribution' in the simplex $\sum_N$. In \cite{SR08} we argued
that the canonical choice for a `uniform distribution' in $\sum_N$ is the measure
\begin{equation}{\label{qN}}
 q_N(dx)= c\cdot \frac{dx_1 \ldots dx_{N-1}}{x_1 \cdot x_2 \cdot \ldots \cdot x_{N-1} \cdot x_N} \cdot \delta_{(1-{\overset{N-1}{\underset{i=1}{\sum}} x_i})} (d
 x_N).
\end{equation}

It remains to get hands on $S_{M_1,\ldots,M_N}(x)$, the \emph{conditional expectation} of $S(\cdot) =\Ent(m \mid \cdot \,)$ under the constraint $\nu(M_1)=x_1,
\ldots, \nu(M_N)=x_N$. We simply replace it by $\underline{S}_{M_1,\ldots,M_N}(x)$, the \emph{minimum} of $\nu\mapsto\Ent(m \mid \nu)$ under the constraint
$\nu(M_1)=x_1, \ldots, \nu(M_N)=x_N$.

Obviously, this minimum is attained at a measure with constant density on each of the sets $M_i$ of the partition, that is $$ \nu = \sum_{i=1}^{N} \frac{x_i}{m(M_i)}
1_{M_i} m.$$
Hence,
\begin{equation}
\underline{S}_{M_1,\ldots,M_N}(x) = -\sum_{i=1}^{N} \log \frac{x_i}{m(M_i)} \cdot m(M_i).
\end{equation}
Replacing $\underline{S}_{M_1,\ldots,M_N}$ by $S_{M_1,\ldots,M_N}$ in (\ref{QbetaM}), the latter yields
\begin{align*}
  &\mathbb{Q}^\beta_{M_1,\ldots,M_N} (dx) = c \cdot e^{-\beta \underline{S}_{M_1,\ldots,M_N}(x)} q_N(dx) \\
&= \frac{\Gamma(\beta)}{\overset{N}{\underset{i=1}{\prod}} \Gamma (\beta m (M_i))} \cdot x_1^{\beta \cdot m (M_1)-1} \cdot \ldots \cdot x_{N-1}^{\beta \cdot m
(M_{N-1})-1} \cdot x_N^{\beta \cdot m (M_N)-1} \times \\
&\quad \times \delta_{(1-{\overset{N-1}{\underset{i=1}{\sum}}} x_i)}(dx_N) dx_{N-1} \ldots dx_1.
\end{align*}

This, indeed, defines a projective family! Hence, the random probability measure $\mathbb{Q}^\beta$ exists and is uniquely defined. It is the well-known
\emph{Dirichlet-Ferguson process}. Therefore, in turn, also the random probability measure $\mathbb{P}^\beta = (\Conj_\mathcal{P})_\ast \mathbb{Q}^\beta$ exists
uniquely.

\section{The Entropic Measure -- Rigorous Definition}

\begin{definition}
 Given any compact Riemannian space $(M,d,m)$ and any parameter $\beta > 0$ the entropic measure
$$ \mathbb{P}^\beta := (\Conj_\mathcal{P})_\ast \mathbb{Q}^\beta$$
is the push forward of the Dirichlet-Ferguson process $\mathbb{Q}^\beta$ (with reference measure $\beta m$) under the conjugation map
$\Conj_\mathcal{P} :\mathcal{P}(M) \rightarrow \mathcal{P}(M)$.
\end{definition}

$\mathbb{P}^\beta$ as well as $\mathbb{Q}^\beta$ are probability measures on the compact space $\mathcal{P}= \mathcal{P}(M)$ of probability measures on $M$.
Recall the definition of the Dirichlet-Ferguson process $\mathbb{Q}^\beta$ \cite{Ferguson73}: For each measurable
partition $M = \overset{.}{\bigcup}_{i=1}^N M_i$
the random vector $(\nu(M_1),\ldots,\nu(M_N))$ is distributed according to a Dirichlet distribution with parameters $(\beta \,m(M_1),\ldots,\beta\, m(M_N))$.
That is, for any bounded Borel function $u: \R^N \rightarrow \R$
\begin{align*}&\int_{\mathcal{P}(M)} u (\nu(M_1),\ldots,\nu(M_N))\mathbb{Q}^\beta (d\nu) =\\
&\dfrac{\Gamma(\beta)}{{\overset{N}{\underset{i=1}{\prod}}} \Gamma (\beta m(M_i))} \cdot \int_{\left[ 0,1\right]^N  } u(x_1,\ldots,x_N) \cdot x_1^{\beta \, m
(M_1)-1} \cdot \ldots \cdot x_N^{\beta \, m (M_N)-1} \times \\
& \times\delta_{(1-{\overset{N-1}{\underset{i=1}{\sum}}}x_i)}(d x_N) d x_{N-1}\ldots d x_1.
\end{align*}
The latter uniquely characterizes the `random probability measure' $\mathbb{Q}^\beta$.
The existence (as a projective limit) is guaranteed by Kolmogorov's theorem.
$\medskip$

An alternative, more direct construction is as follows:
Let $(x_i)_{i\in\mathbb{N}}$ be an iid sequence of points in $M$, distributed according to $m$, and let $(t_i)_{i\in\mathbb{N}}$ be an iid sequence of numbers in
$[0,1]$, independent of the previous sequence and distributed according to the Beta distribution with parameters 1 and $\beta$, i.e. ${\textmd{Prob}}(t_i \in ds)=
\beta (1-s)^{\beta-1} \cdot 1_{[0,1]}(s) ds$.
Put $$\lambda_k = t_k \cdot \prod_{i=1}^{k-1} (1- t_i) \qquad \text{and} \qquad \nu = \sum_{k=1}^{\infty} \lambda_k \cdot \delta_{x_k}.$$
Then $\nu \in \mathcal{P}(M)$ is distributed according to $\mathbb{Q}^\beta$ \cite{Seth94}.

The distribution of $\nu$ does not change if one replaces the above `stick-breaking process' $(\lambda_k)_{k\in\mathbb{N}}$ by the `Dirichlet-Poisson process' $(\lambda_{(k)})_{k\in\mathbb{N}}$ obtained from it by ordering the entries of the previous one according to their size: $\lambda_{(1)}\ge\lambda_{(2)}\ge\ldots\ge0$.
Alternatively, the Dirichlet-Poisson process can be regarded as the sequence of jumps of a Gamma process with parameter $\beta$, ordered according to size.

Note that $m(M_0)=0$ for a given $M_0\subset M$ implies that $\nu(M_0)=0$ for $\mathbb{Q}^\beta$-a.e. $\nu \in \mathcal{P}(M)$. On the other hand, obviously,
$\mathbb{Q}^\beta$-a.e. $\nu \in \mathcal{P}(M)$ is discrete. In contrast to that, as a corollary to Theorem 4.3 and in analogy to the 1-dimensional case we
obtain:

\begin{corollary} If $M\subset \mathbb{R}^n$ then
 $\mathbb{P}^\beta$-a.e. $\mu \in \mathcal{P}(M)$ has no absolutely continuous part and no atoms.
 The topological support of $\mu\cc$ is a $m$-zero set.
\end{corollary}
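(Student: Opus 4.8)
The plan is to combine the almost-sure structure of the Dirichlet–Ferguson process $\mathbb{Q}^\beta$ with the structural results of Section 4, transported through the conjugation map. First I would recall that $\mathbb{Q}^\beta$-a.e.\ $\nu$ is a discrete probability measure on $M\subset\mathbb{R}^n$, say $\nu=\sum_{k}\lambda_k\delta_{x_k}$ with $\lambda_k>0$ and $\sum_k\lambda_k=1$, and that almost surely all the weights are strictly positive and the points $x_k$ are distinct (since $m$ is absolutely continuous, the $x_k$ are pairwise distinct a.s.). Writing $\mu=\Conj_{\mathcal P}(\nu)=\nu\cc$, the assertions to prove about $\mathbb{P}^\beta$-a.e.\ $\mu$ are exactly: $\mu$ has no absolutely continuous part, $\mu$ has no atoms, and $\supp(\mu\cc)=\supp(\nu)$ (as $\nu$ is discrete, its topological support is the closure of $\{x_k\}$, and we must show this set is $m$-negligible — which for an at-most-countable set is automatic).

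Next I would invoke Theorem 4.3 with $N=\infty$: for the discrete measure $\nu=\sum_{k=1}^\infty\lambda_k\delta_{x_k}$ there exist disjoint convex open sets $U_k\subset M$ with $m(U_k)=\lambda_k$ such that the Brenier map $g=\nabla\varphi_1$ taking $m$ to $\nu$ satisfies $g\equiv x_k$ on $U_k$, and the conjugate measure $\mu=\nu\cc$ is supported by the compact $m$-zero set $M\setminus\bigcup_k U_k$. This immediately gives that $\mu$ has no absolutely continuous part: $\mu$ is concentrated on an $m$-null set. For the absence of atoms I would apply Corollary 4.4(ii): if $\mu$ had an atom, Lemma 4.2(ii) (applied to $\mu$ in place of $\mu$, noting $\Conj_{\mathcal P}$ is an involution) would produce a convex open set $U$ of positive $m$-measure with $\nu(U)=(\nu\cc)\cc(U)=\mu\cc(U)$ — wait, more carefully: an atom of $\mu$ of mass $\lambda>0$ yields a convex open $U$ with $m(U)=\lambda$ and $\mu\cc(U)=0$, i.e.\ $\nu(U)=0$; but $\nu$ is $\mathbb{Q}^\beta$-a.s.\ a \emph{faithful} discrete measure whose support is dense in $M$ (this is the key probabilistic input — the $x_k$ are i.i.d.\ $m$-distributed, hence a.s.\ dense in $\supp(m)=M$), so $\nu$ charges every nonempty open set, contradiction. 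Hence $\mathbb{P}^\beta$-a.s.\ $\mu$ has no atoms. The topological-support claim is then just the observation that $\supp(\mu)\subset M\setminus\bigcup_k U_k$ is an $m$-zero set by Theorem 4.3.

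The one genuine point requiring care — and what I expect to be the main obstacle — is justifying the use of Theorem 4.3 in the countable case $N=\infty$: the theorem's statement covers $N\in\mathbb{N}\cup\{\infty\}$, but one must be sure the sets $U_k$ can be taken disjoint, with $\sum_k m(U_k)=\sum_k\lambda_k=1$, and that $\mu=\nu\cc$ is genuinely carried by the \emph{closed} $m$-null complement $M\setminus\bigcup_k U_k$ rather than merely having that set as a support up to null sets. I would handle this either by appealing directly to the $N=\infty$ case of Theorem 4.3 as stated, or, if a self-contained argument is preferred, by approximating $\nu$ by its truncations $\nu^{(N)}=\big(\sum_{k\le N}\lambda_k\delta_{x_k}+(1-\sum_{k\le N}\lambda_k)m\big)$, applying the finite case plus Lemma 4.1, and passing to the limit using continuity of $\Conj_{\mathcal P}$ (Theorem 3.6) together with the fact that $\mu^{(N)}\cc=\nu^{(N)}\cc$ avoids $\bigcup_{k\le N}U_k$, each $U_k$ being open. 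The remaining steps are routine given the cited results.
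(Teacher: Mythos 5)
Your overall route --- sample $\nu$ from $\mathbb{Q}^\beta$, use that it is a.s.\ discrete with strictly positive weights at i.i.d.\ $m$-distributed (hence a.s.\ dense) atoms, and transport Theorem 4.3 and Corollary 4.4 through the involution $\Conj_\mathcal{P}$ --- is exactly the argument the paper intends (the paper gives no further proof beyond citing Theorem 4.3). Your handling of the two main claims is correct: $\mu=\nu\cc$ has no absolutely continuous part because it is carried by the compact $m$-zero set $M\setminus\bigcup_k U_k$ of Theorem 4.3, and it has no atoms because an atom would, via Lemma 4.2(ii)/Corollary 4.4(ii), produce a nonempty open convex set $U$ with $\nu(U)=0$, contradicting the a.s.\ density of the atoms of $\nu$. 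Your extra care about the case $N=\infty$ (truncation plus continuity of $\Conj_\mathcal{P}$ from Theorem 3.6) is legitimate, though the paper states Theorem 4.3 for $N\in\mathbb{N}\cup\{\infty\}$ and simply uses it.

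There is, however, one genuinely false step in your first paragraph: you claim that the topological support of $\mu\cc=\nu$ is $m$-negligible ``automatically, being at most countable''. The topological support is closed, hence equals the closure of $\{x_k\}$; since the $x_k$ are i.i.d.\ with law $m$ of full support, this closure is a.s.\ all of $M$ --- exactly the density you correctly invoke two sentences later for the no-atom argument --- so $\supp(\nu)$ has full $m$-measure and your two assertions contradict each other. The support statement of the corollary must be read (as the surrounding text, the subsequent paragraph about the ``holes'' $U_k$, and Corollary 4.4(i) make clear) as a statement about the entropic sample itself: the topological support of $\mu$, i.e.\ of $\nu\cc$ for the discrete measure $\nu$, is an $m$-zero set, namely a subset of $M\setminus\bigcup_k U_k$. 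That is precisely what you prove at the end of your second paragraph, so the repair is simply to delete the erroneous parenthetical and state the support claim for $\mu$ rather than for $\mu\cc$.
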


For  $\mathbb{P}^\beta$-a.e. $\mu \in \mathcal{P}(M)$ there exist a countable number of open convex sets $U_k\subset M$ (`holes in the support of $\mu$')
with sizes $\lambda_k=m(U_k)$, $k\in\mathbb{N}$. The measure $\mu$ is supported on  the complement of all these holes $M\setminus\bigcup_k U_k$, a compact $m$-zero set.

The sequence $(\lambda_k)_{k\in\mathbb{N}}$ of sizes of the holes is distributed according to the stick breaking process with parameter $\beta$. In particular,
$$\mathbb{E}\lambda_k=\frac1\beta\left(\frac\beta{1+\beta}\right)^k\qquad(\forall k\in\mathbb{N}).$$
In average,  each hole  has size $\le\frac1{1+\beta}$. For large $\beta$, the size of the $k$-th hole decays like $\frac1\beta\exp(-k/\beta)$ as $k\to\infty$.
For small $\beta$, $\lambda_{(1)}$ the size of the largest hole is of order $\sim\frac1{1+0.7\beta}$, \cite{Griffiths88}.

\begin{remark}\rm
 In principle, the reference measures in the conjugation map (see Remark \ref{two-m}) and in the Dirichlet-Ferguson process could be chosen different from each
 other.
\end{remark}

Given a diffeomorphism $h:M\rightarrow M$ the challenge for the sequel will be to deduce a \emph{change of variable formula} for the entropic measure
$\mathbb{P}^\beta (d \mu)$ under the induced transformation  $$\mu \mapsto h_\ast \mu$$ of $\mathcal{P}(M)$.

\begin{conjecture}
 For each $\varphi^2$-diffeomorphism $h : M\rightarrow M$ there exists a function $Y_h ^\beta : \mathcal{P}\rightarrow \R$ such that
\begin{equation} \int U(h_*\mu) \mathbb{P}^\beta (d\mu) = \int U(\mu) Y_h^\beta (\mu) \mathbb{P}^\beta (d\mu),\end{equation}
for all bounded Borel functions $U:\mathcal{P}\rightarrow\R$. (It suffices to consider $U$ of the form $U(\mu)= u(\mu(M_1)),\ldots,\mu(M_N))$ for measurable
partitions $M=\bigcup M_i$ and bounded measurable $u:\R^N\rightarrow \R$.)
The density $Y_h^\beta$ is of the form
\begin{equation}Y_h^\beta(\mu) = \exp\left(\beta \int_M \log \det Dh (x) \mu(dx)\right) \cdot Y_h^0 (\mu)\end{equation}
with $Y_h^0 (\mu)$ being independent of $\beta$.
\end{conjecture}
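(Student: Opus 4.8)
\medskip
\noindent\textbf{Towards a proof.} The natural plan is to transport everything to the Dirichlet--Ferguson process by means of the conjugation map. Since $\Conj_\mathcal{P}$ is an involution one has $h_*\circ\Conj_\mathcal{P}=\Conj_\mathcal{P}\circ T_h$ with $T_h:=\Conj_\mathcal{P}\circ h_*\circ\Conj_\mathcal{P}:\mathcal{P}\to\mathcal{P}$, so that, using $\mathbb{P}^\beta=(\Conj_\mathcal{P})_*\mathbb{Q}^\beta$,
$$\int U(h_*\mu)\,\mathbb{P}^\beta(d\mu)=\int (U\circ\Conj_\mathcal{P})(T_h\nu)\,\mathbb{Q}^\beta(d\nu).$$
Hence the conjecture is equivalent to the quasi--invariance $(T_h)_*\mathbb{Q}^\beta\ll\mathbb{Q}^\beta$ together with an identification of the density: if $\rho_h^\beta:=d(T_h)_*\mathbb{Q}^\beta/d\mathbb{Q}^\beta$ exists, then necessarily $Y_h^\beta=\rho_h^\beta\circ\Conj_\mathcal{P}$, and the asserted product form of $Y_h^\beta$ becomes a structural statement about $\rho_h^\beta$ after conjugating back. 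To handle both existence and identification I would use the likelihood--ratio/Kakutani machinery along a refining sequence of measurable partitions $\mathcal{M}_k=\{M_1^{(k)},\dots,M_{N_k}^{(k)}\}$ of $M$ generating the Borel $\sigma$--field of $\mathcal{P}$. On $\sigma(\mathcal{M}_k)$ the law of $(\nu(M^{(k)}_1),\dots,\nu(M^{(k)}_{N_k}))$ under $\mathbb{Q}^\beta$ is the explicit Dirichlet law with parameters $(\beta m(M^{(k)}_i))_i$; once one checks that the corresponding finite--dimensional marginal of $(T_h)_*\mathbb{Q}^\beta$ is absolutely continuous with respect to it, the density ratios $\rho_k$ form a nonnegative $\mathbb{Q}^\beta$--martingale, $(T_h)_*\mathbb{Q}^\beta\ll\mathbb{Q}^\beta$ holds iff $(\rho_k)$ is uniformly integrable, and then $\rho_h^\beta=\lim_k\rho_k$. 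So the two tasks are: (i) compute $\rho_k$ explicitly, which requires describing $T_h$ at the level of partitions, and (ii) prove uniform integrability, with the $C^2$--regularity of $h$ as the hypothesis that should make it go through.

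In dimension one this is essentially the computation of \cite{SR08}, and it shows the mechanism. There $\Conj_\mathcal{P}$ is inversion of distribution functions, so $T_h$ is just post--composition, $F_{T_h\nu}=h\circ F_\nu$; in the cumulative coordinates $s_i=\nu(M^{(k)}_1\cup\dots\cup M^{(k)}_i)$ it is the coordinate--wise map $s_i\mapsto h(s_i)$, and dividing the two Dirichlet densities gives
$$\rho_k(s)=\prod_{i}\Big(\frac{h^{-1}(s_i)-h^{-1}(s_{i-1})}{s_i-s_{i-1}}\Big)^{\beta m(M^{(k)}_i)-1}\;\prod_i (h^{-1})'(s_i)\qquad(s_0=0,\ s_{N_k}=1).$$
From $h\in C^2$ and the mean value theorem one gets that $\log\rho_k$ stays bounded (the first product is bounded since $(h^{-1})'$ is bounded away from $0$ and $\infty$; the second, rewritten by the same mean values, telescopes against $\sum_i(s_i-s_{i-1})=1$ and is controlled by $\|(\log\det Dh)'\|_\infty$), so $(\rho_k)$ is even $L^\infty$--bounded, a fortiori uniformly integrable. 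In the limit the factor $\prod_i(\cdot)^{\beta m(M^{(k)}_i)}$ converges to an exponential of the shape $\exp(\beta\int_M\log\det Dh\,d(\cdot))$, while the remaining $\beta$--free factor converges to what becomes $Y_h^0$; conjugating back so that the integrating measure becomes $\mu=\Conj_\mathcal{P}\nu$ yields the claimed form of $Y_h^\beta$.

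In dimension $n\ge 2$ the same blueprint applies in principle, but two points turn genuinely hard --- which is why the statement is only a conjecture. First, $T_h$ is no longer explicit: if $g$ is the Brenier map of $\Conj_\mathcal{P}\nu$ (Proposition 2.4), then $T_h\nu$ has Brenier map $\big(\,\overline{h\circ g}\,\big)\cc$, where $\overline{h\circ g}$ denotes the monotone rearrangement of $h\circ g$ (the unique monotone map with the same push--forward of $m$); and $h\circ g$ --- the composition of the diffeomorphism $h$ with a gradient of a convex function --- is itself not monotone, so its rearrangement is given by no formula and the one--dimensional ``distribution function'' shortcut disappears. One must instead use the random Laguerre tessellation description of $\mathbb{P}^\beta$--a.e.\ $\mu$ (Theorem 4.2, Corollary 6.2) and control how the cell masses $m(U_k)$ and the combinatorial type of the tessellation respond to $\mu\mapsto h_*\mu$, uniformly as $\mathcal{M}_k$ is refined. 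Second, one must establish uniform integrability of the martingale in the presence of the $m$--singular, fractal--like support of $\mu$: since finite partitions never fully resolve the atoms of the underlying $\mathbb{Q}^\beta$, the naive Riemann--sum estimates of the one--dimensional argument do not transfer verbatim. \textbf{The main obstacle is precisely this combination} --- obtaining a sufficiently precise, partition--uniform handle on the monotone rearrangement of $h\circ g$ (equivalently, on $T_h$ restricted to finite partitions), and deducing from it the uniform integrability, i.e.\ the absolute continuity $(T_h)_*\mathbb{Q}^\beta\ll\mathbb{Q}^\beta$. Once that is in place, the factorization of the limiting density $\rho_h^\beta$ into the $\beta$--linear exponential $\exp(\beta\int_M\log\det Dh\,d\mu)$ (after conjugating back) and a $\beta$--free remainder $Y_h^0$, and the $\beta$--independence of $Y_h^0$, should follow by the same bookkeeping as in the one--dimensional case.
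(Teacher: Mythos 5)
Be aware, first, that the paper contains no proof of this statement: it is stated explicitly as a \emph{Conjecture}, and the author remarks at the end of Section 6 that ``until now, no higher dimensional analogue is known'' of the one-dimensional change of variable formula of \cite{SR08}. So there is no proof in the paper to compare yours against, and the question is only whether your proposal actually closes the conjecture. It does not, and you say so yourself. Your reduction is correct and sensible: since $\Conj_{\mathcal P}$ is a (bi-measurable, continuous) involution and $\mathbb P^\beta=(\Conj_{\mathcal P})_*\mathbb Q^\beta$, the existence of the density $Y_h^\beta$ is indeed equivalent to $(T_h)_*\mathbb Q^\beta\ll\mathbb Q^\beta$ with $T_h=\Conj_{\mathcal P}\circ h_*\circ\Conj_{\mathcal P}$ and $Y_h^\beta=\rho_h^\beta\circ\Conj_{\mathcal P}$, and your description of $T_h$ via the Brenier map (the rearrangement of $h\circ g$, then its conjugate) is consistent with Propositions 2.3--2.4. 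The one-dimensional mechanism you sketch is essentially the known result of \cite{SR08}, \cite{RYZ07}, so that part is fine as motivation.

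The genuine gap is exactly the two items you flag and then leave open, and they are not technical loose ends but the entire content of the conjecture in dimension $n\ge 2$: (i) a usable, partition-level description of $T_h$ --- equivalently of the monotone rearrangement of $h\circ g$, for which no formula exists once the distribution-function picture is gone --- precise enough to compute the finite-dimensional likelihood ratios $\rho_k$ against the explicit Dirichlet marginals; and (ii) uniform integrability of the resulting martingale, i.e.\ the absolute continuity $(T_h)_*\mathbb Q^\beta\ll\mathbb Q^\beta$, which in dimension one came from elementary mean-value/telescoping bounds that have no evident analogue when $\mathbb P^\beta$-typical measures are supported on $m$-null complements of Laguerre-type tessellations (Theorem 4.2, Corollary 6.2). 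Moreover, the second half of the conjecture --- the factorization $Y_h^\beta(\mu)=\exp(\beta\int_M\log\det Dh\,d\mu)\cdot Y_h^0(\mu)$ with $Y_h^0$ independent of $\beta$ --- is only asserted to follow ``by the same bookkeeping,'' which presupposes (i) and (ii) and is itself nontrivial: in dimension one the $\beta$-free factor is an infinite product over the gap structure, and identifying and controlling its higher-dimensional counterpart is part of the open problem. So read your text as a reasonable program (and a correct reformulation of the conjecture as a quasi-invariance statement for the Dirichlet--Ferguson process under $T_h$), not as a proof; the conjecture remains open both in the paper and after your argument.
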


As an intermediate step, in order to derive a more direct representation for the entropic measure
$\mathbb{P}^\beta$ on $\mathcal{P}(M)$, we may consider the measure $$\mathbb{Q}_\G ^\beta := (\chi^{-1})_\ast \mathbb{P}^\beta
=(\Conj_\G\circ\chi^{-1})_\ast \mathbb{Q}^\beta$$ on $\G$. It is the unique probability measure on the space $\G$ of monotone maps with the property that
\begin{align*}
 &\int_\G u(m((g\cc)^{-1}(M_1)),\ldots,m((g\cc)^{-1}(M_N)) \mathbb{Q}_\G ^\beta (dg) = \\ &\dfrac{\Gamma(\beta)}{{\overset{N}{\underset{i=1}{\prod}}} \Gamma (\beta
 m(M_i))} \cdot \int_{\left[ 0,1\right]^N  } u(x_1,\ldots,x_N) \cdot x_1^{\beta \, m (M_1)-1} \cdot \ldots \cdot x_N^{\beta \, m (M_N)-1} \times \\
& \times \delta_{(1-{\overset{N-1}{\underset{i=1}{\sum}}}x_i)}(d x_N) d x_{N-1}\ldots d x_1
\end{align*}
for each measurable partition $M= \overset{.}{\bigcup}_{i=1}^{N} M_i$ and each bounded Borel function $u:\R^N \rightarrow \R$.
Actually, one may assume without restriction that the partition consists of continuity sets of $m$ (i.e. $m(\partial M_i)=0$ for all $i=1,\ldots,N$) and that $u$ is
continuous.
Note that $(g\cc)^{-1} =g$ almost everywhere whenever $g_\ast m\ll m$.
Moreover, note that in dimension 1, say $M=[0,1]$, the map $\Conj_\G\circ\chi^{-1}: \mathcal{P}\to\G$ assigns to each probability measure $\nu$ its cumulative
distribution function $g$.

\bigskip

In dimension 1, the change of variable formula (6.1) allows to prove closability of the Dirichlet form
 \begin{eqnarray*}
\mathcal{E}(u,u)&=& \int_{\mathcal{P}} \|\nabla u\|^2(\mu) \ d\mathbb{P}^\beta(\mu)\end{eqnarray*}
and to construct the {\em Wasserstein diffusion} $(\mu_t)_{t\ge0}$, the reversible Markov process with continuous trajectories (and invariant distribution
$\mathbb{P}^\beta$) associated to it \cite{SR08}.
The change of variable formula in dimension 1 can also be regarded as a `Girsanov type theorem' for the (normalized) Gamma process \cite{RYZ07}.
Until now, no higher dimensional analogue is known.

The Wasserstein diffusion on 1-dimensional spaces satisfies a logarithmic Sobolev inequality \cite{Stannat}; it can be obtained as scaling limit of empirical
distributions of interacting particle systems \cite{AR07}.

\end{document}